\theoremstyle{plain}
\newtheorem{thm}{Theorem}
\newtheorem{lem}{Lemma}
\theoremstyle{definition}
\newtheorem{defn}{Definition}
\theoremstyle{remark}
\newtheorem*{remark*}{Remark}
\newcommand{\cE}{{\mathcal{E}}}
\newcommand{\cH}{{\mathcal{H}}}
        \newcommand{\field}[1]{{\mathbb{#1}}}
        \newcommand{\NN}{\field{N}}
        \newcommand{\ZZ}{\field{Z}}
        \newcommand{\RR}{\field{R}}
        \newcommand{\CC}{\field{C}}
\newcommand{\supp}{\operatorname{supp}}
\begin{document}

\title[Trace formula for the magnetic Laplacian]{Trace formula for the magnetic Laplacian}

\author[Y. A. Kordyukov]{Yuri A. Kordyukov}
\address{Institute of Mathematics, Ufa Federal Research Centre, Russian Academy of Sciences, 112~Chernyshevsky str., 450008 Ufa, Russia and Novosibirsk State University, Pirogova st 1, 630090, Novosibirsk, Russia} \email{yurikor@matem.anrb.ru}

\author[I. A. Taimanov]{Iskander A. Taimanov}
\address{Sobolev Institute of Mathematics, 4 Acad. Koptyug avenue,
and Novosibirsk State University, Pirogova st 1, 630090, Novosibirsk, Russia}
\email{taimanov@math.nsc.ru}

\thanks{This work was supported by the Laboratory of Topology and Dynamics, Novosibirsk State University (contract no. 14.Y26.31.0025 with the Ministry of Education and Science of the Russian Federation).}



\begin{abstract}
The Guillemin-Uribe trace formula is a semiclassical version of the Selberg trace formula and  the more general Duistermaat-Guillemin formula for elliptic operators on compact manifolds, which reflects the dynamics of magnetic geodesic flows in terms of eigenvalues of a natural differential operator  (the magnetic Laplacian) associated with the magnetic field. In the present paper, we give a survey of basic notions and results related with the Guillemin-Uribe trace formula and provide concrete examples of its computation for two-dimensional constant curvature surfaces with constant magnetic fields and for the Katok example.
\end{abstract}

\date{}

 \maketitle
\tableofcontents
\section{Introduction}

The Selberg trace formula \cite{Selberg}, which plays the fundamental role in modern number theory, for a function $h\in C^\infty_c(\RR)$ and a compact hyperbolic surface $X$, expresses the trace of the operator $h(\Delta)$, where $\Delta$ is the Laplace-Beltrami operator on $X$, in terms of the volume of $X$ and the lengths of closed geodesics on it. Numerous applications of the formula and its generalizations in the theory of dynamical systems are related with this. For the circle, it turns into the Poisson summation formula, and, for the two-sphere of constant curvature, the formula for the trace of  $h(\Delta)$ has a natural interpretation in terms of the area of the sphere and closed geodesics, all of them being degenerate in this case 
(see, for instance, \cite{Marklof}).

The development of the variational theory of closed magnetic geodesics having its origins in the papers of S. P. Novikov \cite{Novikov-S,Novikov1982} gave rise to many remarkable results, which we will not dwell on here. In this paper, we will consider the question whether there exist analogs of the trace formula for magnetic geodesic flows, reflecting the dynamics of the flows in terms of the eigenvalues of some natural differential operator (the magnetic Laplacian) associated with the magnetic field.

The magnetic geodesic flow on an $n$-dimensional manifold $M$ is determined by a Riemannian metric $g$ and a closed $2$-form
$F$.  Namely, it is the following Hamiltonian system on the cotangent bundle $X=T^*M$ of $M$. Denote by $\Omega_0$ the canonical symplectic form on $T^*M$:
\[
\Omega_0=\sum_{j=1}^ndp_j\wedge dx^j.
\]
The magnetic flow $\Phi^t : T^*M\to T^*M$ associated with $(g,F)$ is the Hamiltonian flow given by the Hamiltonian 
\begin{equation}\label{e:1.2a}
\cH(x,p)=\frac{1}{2}|p|^2_{g^{-1}}=\frac{1}{2}\sum_{j,k=1}^ng^{jk}p_jp_k,
\end{equation}
with respect to the twisted symplectic form on $T^*M$:
\begin{equation}\label{e:1.3}
\Omega=\Omega_0+\pi^*_M F.
\end{equation}
Here $\pi_M :T^*M\to M$ denotes the canonical projection. In local coordinates, this expression is written as
\[
\Omega=\sum_{j=1}^{n}dp_j\wedge dx^j+\sum_{j,k=1}^n F_{jk}dx^j\wedge dx^k,
\]
where
\[
F=\sum_{j,k=1}^n F_{jk}dx^j\wedge dx^k.
\]

Hamilton's equations given by an arbitrary Hamiltonian $H$ with respect to the form $\Omega$ are written as 
\begin{equation}\label{e:1.4}
\frac{dx^j}{dt}=\frac{\partial H}{\partial p_j},\quad \frac{dp_j}{dt}=-\frac{\partial H}{\partial x^j}+\sum_{k=1}^n F_{jk}\frac{\partial H}{\partial p_k}, \quad j=1,\ldots,n.
\end{equation}
In particular, if $H$ is given by \eqref{e:1.2a}, we get a Hamiltonian system defining the magnetic flow $\Phi^t$:
\begin{equation}\label{e:1.5}
\frac{dx^j}{dt}= p^j,\quad \frac{dp_j}{dt}=-\frac{1}{2}\sum_{k,\ell=1}^n\frac{\partial g^{k\ell}}{\partial x^j}p_kp_\ell +\sum_{k=1}^nF_{jk}p^k, \quad j=1,\ldots,n,
\end{equation}
where $p^j=\sum_{k=1}^ng^{jk}p_k$.

Let $J : TM\to TM$ be the skew-symmetric operator such that
\[
F(u,v)=g(Ju,v), \quad u,v\in TM.
\]
If $(x(t),\xi(t))=\Phi^t(x,\xi)$ is a trajectory of the magnetic flow $\Phi$, then its projection to $M$ satisfies the second order differential equation 
\begin{equation}\label{e:magnetic}
\nabla^{TM}_{\dot{x}}\dot{x}=J[\dot{x}],
\end{equation}

It is easy to see that the restriction of the magnetic geodesic flow to different energy levels $\{\cH(x,p) = \cE = \mathrm{const}\}$ are trajectory non-isomorphic, as opposed to the case of geodesic flow. Therefore, we will always consider the magnetic geodesic flow at a fixed energy level.

A natural candidate for the magnetic analog of the Selberg trace formula is the Guillemin-Uribe formula \cite{Gu-Uribe89} (see the formula  (\ref{traceformula}) in Theorem \ref{t:varphi}), which can be applied in the case when the magnetic field form $F$ satisfies the quantization condition
\begin{equation}\label{e:prequantum}
[F]\in H^2(M,2\pi \mathbb Z).
\end{equation}
In this case, $F$ equals (up to multiplication by $i$) the curvature form of some Hermitian bundle $L$ on $M$. This fact allows us to construct the magnetic Laplacian associated with $F$, for which the Guillemin--Uribe formula holds.

Observe that there are another methods of constructing trace formulas for magnetic Laplacians \cite{CGO,HP,ColinDeV12}, which, however, are not related with restrictions of the flow to fixed energy levels.

Let us introduce some notions necessary for the exposition.

Let $(M,g)$ be a compact Riemannian manifold of dimension $n$ and let $(L,h^L)$ be a Hermitian line bundle on $M$ with a Hermitian connection 
\[
\nabla^L : C^\infty(M,L)\to C^\infty(M,T^*M\otimes L).
\] 
The curvature form of $\nabla^L$ is given by $R^L=(\nabla^L)^2$. We will assume that it is related with the magnetic field form $F$ by
\begin{equation}\label{e:1.2}
F = iR^L.
\end{equation}
It is well known that, if $F$ satisfies the quantization condition \eqref{e:prequantum}, then such a Hermitian line bundle $(L,h^L)$ with Hermitian connection $\nabla^L$ exists.

The Riemannian metric on $X$ and the Hermitian structure on $L$ allows us
to define inner products on $C^\infty(M,L)$ and $C^\infty(M,T^*M\otimes L)$ and the adjoint operator
\[
(\nabla^L)^* : C^\infty(M,T^*M\otimes L)\to C^\infty(M,L).
\]
The magnetic Laplacian is a second order differential operator acting on $C^\infty(M,L)$ by
\[
\Delta^{L} = (\nabla^L)^*\nabla^L.
\]
This operator can be considered as the Bochner Laplacian associated with the Hermitian line bundle $L$.

Let $\nabla^{TM}$ be the Levi-Civita connection of $g$. If $\{e_j\}_{j=1,\ldots,n}$ is a local orthonormal frame in the tangent bundle $TM$, then the operator $\Delta^{L}$ is given by
\begin{equation}\label{e:1.1}
\Delta^{L}=-\sum_{j=1}^n\left[(\nabla^{L}_{e_j})^2-\nabla^{L}_{\nabla^{TM}_{e_j}e_j} \right].
\end{equation}

Choose local coordinates $(x^1,\ldots,x^n)$ on an open subset $U$ of $M$. Assume that the Hermitian line bundle $L$ is trivial on $U$, i.e. 
\[
L\left|_U\right.\cong U\times \CC\ \text{and}\ |(x,z)|_{h^L}=|z|,\quad (x,z)\in U\times \CC.
\] 
Then the covariant derivative $\nabla^L$ can be written as
\[
\nabla^L = d-i A : C^\infty(U)\to C^\infty(U,T^*U),
\]
where $A= \sum_{j=1}^nA_j(x)\,dx^j$ is a real-valued connection 1-form (the magnetic potential).
It is easy to check that $R^L=-idA$ and $F$ coincides with the the magnetic field 2-form  
\[
B=dA=\sum_{j<k}B_{jk}\,dx^j\wedge dx^k, \quad
B_{jk}=\frac{\partial A_k}{\partial x^j}-\frac{\partial
A_j}{\partial x^k}.
\]
Let us write the matrix of the Riemannian metric $g$ as
$g(x)=(g_{j\ell}(x))_{1\leq j,\ell\leq n}$, its inverse as
$g(x)^{-1}=(g^{j\ell}(x))_{1\leq j,\ell\leq n}$
and denote $|g(x)|=\det(g(x))$, then $\Delta^{L}$ takes the form
\[
\Delta^{L}=-\frac{1}{\sqrt{|g(x)|}}\sum_{1\leq j,\ell\leq n}\left(\frac{\partial}{\partial x^j}-iA_j(x)\right) \left[\sqrt{|g(x)|}
g^{j\ell}(x) \left(\frac{\partial}{\partial
x^\ell}-iA_\ell(x)\right)\right].
\]

Observe that, if two Hermitian connections on the Hermitian line bundle $(L, h^L)$ are gauge equivalent, then the corresponding magnetic Laplacians are unitary equivalent and their spectra are the same. Therefore, if the manifold $M$ is simply connected, then the eigenvalues of the magnetic Laplacian $\Delta^L$ depend only on the magnetic field form $F$.

For any $N\in \NN$, consider the $N$th tensor power $L^N=L^{\otimes N}$ of the line bundle $L$. Denote by $\Delta^{L^N}$ the corresponding magnetic Laplacian on $C^\infty(M,L^N)$:
\begin{multline}\label{e:DLp}
\Delta^{L^N} =-\frac{1}{\sqrt{|g(x)|}}\sum_{1\leq j,\ell\leq n}\left(\frac{\partial}{\partial x^j}-iNA_j(x)\right)\times \\ \times  \left[\sqrt{|g(x)|}
g^{j\ell}(x) \left(\frac{\partial}{\partial
x^\ell}-iNA_\ell(x)\right)\right].
\end{multline}
Let $\{\nu_{N,j}, j=0,1,2,\ldots\}$ be the eigenvalues of the operator $\Delta^{L^N}$ taken with multiplicities. Put
\begin{equation}\label{e:def-lambda}
\lambda_{N,j}=\sqrt{\nu_{N,j}+N^2}.
\end{equation}

Following the notations of \cite{Gu-Uribe89}, we denote by $H(x,p)$ another Hamiltoinian related with $\cH$ by the formula
$$
H = \sqrt{2\cH+1},
$$
that implies the relation between the (physical) energy levels $\cE$ associated with the Hamiltonian $\cH$ and the (formal) energy levels $E$ associated with the Hamiltonian $H$:
$$
\cE = \frac{1}{2}\sum_{j,k=1}^ng^{jk}p_jp_k = \frac{E^2-1}{2}.
$$
For the sake of brevity and following \cite{Gu-Uribe89}, we will also use notation 
$$
c = \sqrt{E^2-1} = \sqrt{2\cE}.
$$

Fix $E>1$. It turns out that the asymptotic distribution of $\lambda_{N,j}$ in intervals of the form $(EN-\tau,EN+\tau)$ with an arbitrary $\tau>0$ is related with the geometry of the corresponding magnetic flow at the energy level $E$. This relationship is expressed by the trace formula proved by V. Guillemin and A. Uribe in \cite{Gu-Uribe89}, which describes the asymptotic behavior as $N\to\infty$ of the sequence
\begin{equation}\label{e:Yp}
Y_N(\varphi)=\sum_{j=0}^{\infty}\varphi(\lambda_{N,j}-EN),\quad N\in \mathbb N,
\end{equation}
defined by an arbitrary function $\varphi\in \mathcal S(\RR)$.

The Guillemin-Uribe trace formula is a semiclassical version of the Selberg trace formula and  the more general Duistermaat-Guillemin formula for elliptic operators on compact manifolds in the spirit of the semiclassical trace formula, first suggested by M. Gutzwiller in \cite{Gutzwilier} and mathematically rigorously proved in \cite{Meinrenken92,Meinrenken94,Paul-Uribe95,CRM99,Sj-Zworski}  (see also \cite{Uribe}). Unlike them, the Guillemin-Uribe trace formula is less known and scarcely considered by the experts in mathematical physics and dynamical systems. In \cite{Gu-Uribe90}, this formula was extended to the case of  the quantum Hamiltonian, describing the motion of a charged particle in an external Yang-Mills field with an arbitrary gauge group. 

In the present paper, after the exposition of the Guillemin-Uribe formula in Section 2, we give in Section 3 concrete examples of its computation for two-dimensional constant curvature surfaces with constant magnetic fields and for the Katok example. These computations show that the trace formula reflects some dynamical effects. However, for now, it is too early to talk about its broad applicability in the theory of dynamical systems.
Let us make a couple of remarks on this issue.

 {\sc Remark 1} (Ma\~{n}\'{e} level).  It was clear from the early 1980s that one should use different methods for large and small energy levels to prove the existence of periodic magnetic geodesics  
 \cite{Novikov1982,Taimanov1983}. In the case when the magnetic field is given by an exact form, the usual Morse theory can be applied for large energies, whereas for small energies (the case of ``strong'', with respect to the energy level, magnetic field), one should use completely different methods \cite{NT,Taimanov1,Taimanov2}. In the case when the magnetic field is not exact, an analog of ``strong'' magnetic fields was introduced in \cite{Taimanov3}. One can apply to such fields an approach suggested in \cite{Taimanov2}. The exposition of the state of the art in the variational theory of closed magnetic geodesics on surfaces can be found in \cite{A1,A2} (see also the references therein). In \cite{CMP}, it was established that, for exact magnetic fields on two-dimensional surfaces, the energy levels, with respect to which the magnetic field is ``strong'' or ``weak'', are separated by one constant, which equals the Ma\~{n}\'{e} level of this Hamiltonian system. The notion of the Ma\~{n}\'{e} level plays an important role in the theory of dynamical systems \cite{CIPP,CFP,Herbrich}.
\medskip\par
{\sl The trace formula applied to the magnetic geodesic flow associated with the hyperbolic metric
$y^{-2}R^2(dx^2+dy^2)$ and the magnetic field $F = y^{-1}dx \wedge dy$ is defined only for the energies below the Ma\~{n}\'{e} level $\cE_M = 1/(2R)$ and can't be extended analytically at this level and above it (see Theorem \ref{t:hyper} below)}.

Above this energy level, the flow becomes to be Anosov and is conjugate to the geodesic flow on the constant negative curvature surface. An analog of the classical Selberg trace formula holds for it \cite{CGO} (it is exactly a physical interpretation of the classical Maass-Selberg trace formula  for the Laplacian on automorphic forms \cite{Hejhal, Venkov}). It doesn't reflect the dynamics of the magnetic flow at fixed energy levels.
\medskip\par
{\sc Remark 2} (The Katok example).
The Katok example \cite{Katok}, demonstrating that the KAM theorem fails for degenerate Hamiltonians, can be represented in the form of the magnetic geodesic flow on the two-sphere at a fixed energy level \cite{Rademacher04}. Actually, there is a one-parameter family of flows, depending on a parameter $\varepsilon$, where $0 \leq \varepsilon <1$,
moreover: 1) if $\varepsilon = 0$, we have the usual geodesic flow on the sphere of constant curvature; 2) if $\varepsilon >0$, then, for rational $\varepsilon$, all trajectories are closed, and, for irrational $\varepsilon$, the flow has only two closed trajectories, which are the equator of the sphere, running in two opposite directions.
Note that these equators are trajectories for all values of $\varepsilon$.
\medskip\par
{\sl The trace formula for these flows for irrational $\varepsilon$ splits into two components, each of them is naturally identified with the contribution of one of equatorial trajectories. The coefficients of the asymptotic expansions are spectral invariants of the magnetic Laplacians. They can be considered as analogs of the wave invariants of closed geodesics.}

In the case of closed geodesics, the wave invariants \cite{G,Z1,Z2,GP} are expressed in terms of the coefficients of the quantum Birkhoff normal form for the wave operator in a neighborhood of the closed geodesic, that allows one to compute them in terms of invariants of the Riemannian metric and dynamic invariants of closed geodesics.

{\sl For closed magnetic geodesics, similar invariants haven't been considered yet.}

 \section{The trace formula}\label{s:trace}

\subsection{Classical dynamics}
Let $(M,g)$ be a compact Riemannian manifold of dimension $n$ and let $(L,h^L)$ be a Hermitian line bundle on $M$ with a Hermitian connection $\nabla^L$. Fix $E>1$. As mentioned in Introduction, we will consider the Hamiltonian flow $\phi$ on the cotangent bundle $X=T^*M$ of $M$ equipped with the twisted symplectic form $\Omega$ defined by the Hamiltonian $H$ given by
\begin{equation}\label{e:H}
H(x,p)=(2\cH(x,p)+1)^{1/2}=\left(\sum_{j,k=1}^ng^{jk}p_jp_k+1\right)^{1/2}.
\end{equation}
The corresponding Hamiltonian system has the form:
\begin{equation}\label{e:1.6}
\frac{dx^j}{dt}=\frac{p^j}{H},\quad \frac{dp_j}{dt}=-\frac{1}{2H}\sum_{k,\ell=1}^n\frac{\partial g^{k\ell}}{\partial x^j}p_kp_\ell + \frac{1}{H}\sum_{k=1}^n F_{jk}p^k, \quad j=1,\ldots,n.
\end{equation}
It is easy to see that the solutions of this system with a given energy level  $H=E$ coincide (up to a time change, depending on $E$) with the solutions of  \eqref{e:1.5}.

The trace formula (see Theorem \ref{t:varphi} below) is derived under an additional assumption on the set of periodic trajectories of the flow with a fixed energy level, which is called the cleanness condition for the flow. It is a generalization of the non-degeneracy condition for isolated periodic trajectories and is based on the notion of clean intersection of submanifolds  introduced by R. Bott \cite{Bott}.

Denote $X_E=H^{-1}(E)\subset T^*M$. It is easy to see that $E>1$ is a regular value of $H$, and, therefore, $X_E$ is a smooth submanifold of  $T^*M$.

\begin{defn}
We say that the flow $\phi$ is clean on $X_E$, if the set 
$\mathcal P=\{(T,x)\in \mathbb R\times X_E : \phi^T(x)=x\}$
is a submanifold of $\mathbb R\times X_E$, and, for any $(T,x)\in \mathcal P$, the following identity holds: 
\[
T_{(T,x)}\mathcal P=\{(\tau,v)\in T_{(T,x)}(\mathbb R\times X_E): d\phi_{(T,x)}(\tau,v)=v\},
\]
where $\phi : \mathbb R \times X_E \to X_E, (t,x)\mapsto \phi^t(x)$.
\end{defn}

If the set of periods of the flow is discrete, cleanness of the flow is equivalent to the following condition: for every period $T$, the fixed point set 
\[
\mathcal P_T = \{x\in X_E : \phi^T(x)=x\}
\] 
of $\phi^T$ is a submanifold of $X_E$, and at each $x\in \mathcal P_T$ the tangent space of $\mathcal P_T$ coincides with the set of fixed vectors of the tangent map $d(\phi^T)_x$. 

Now suppose that the magnetic field form $F$ satisfies the condition \eqref{e:prequantum}. Recall the notion of action for closed curves in  $T^*M$. Let $\eta$ be the canonical 1-form on $T^*M$:
\[
\eta=\sum_{j=1}^np_jdx^j, \quad d\eta=\Omega_0.
\]
In the case when $F$ is exact, i.e. $F=dA$ for some real-valued 1-form $A$, the action  $S_\gamma$ of a closed curve $\gamma$ in $T^*M$ is defined by
\[
S_\gamma=\int_\gamma \eta+\pi^*_M A.
\]
In the case of an arbitrary form $F$, the action of $\gamma$ is defined modulo multiplies of $2\pi$. Denote by $h_A(\gamma)\in S^1=\RR/2\pi \ZZ$ the holonomy of the projection $\pi_M\circ \gamma$ of the curve $\gamma$ to $M$ with respect to the connection $\nabla^L$ on $L$. Then the action $S_\gamma$ of $\gamma$ is given by
\[
S_\gamma=\int_\gamma \eta+ h_A(\gamma).
\]

Suppose that a curve $\gamma$ is given by a periodic solution of the system \eqref{e:1.6} on $X_E$ with period $T$. Then, after a time change, it is given by a periodic solution of the system \eqref{e:1.5} with period $L$, which coincides with the length of the closed curve $\pi_M\circ \gamma$. It  is easy to see that
\[
L=\frac{\sqrt{E^2-1}}{E}|T|.
\]
Since the Hamiltonian $\mathcal H$ given by \eqref{e:1.2a} is homogeneous of degree 2 in momenta, the relation $\langle \eta, \Xi_{\mathcal H}\rangle =2\mathcal H$ holds  for the corresponding Hamiltonian vector field $\Xi_{\mathcal H}$ (with respect to $\Omega$). This implies that
\[
\int_\gamma\eta=\pm L\sqrt{E^2-1},
\]
where the sign $\pm$ depends on the orientation of $\gamma$.

Thus, if the form $F$ is exact, $F=dA$ for some real-valued form $A$, the action of a periodic trajectory $\gamma$ of the flow $\phi$ on $X_E$ of length $L$ is given by
\begin{equation}\label{e:Sgamma}
S_\gamma=\pm L\sqrt{E^2-1}+\int_{\pi_M\circ \gamma} A,
\end{equation}
and, in the case of an arbitrary form $F$, it is given by
\begin{equation}\label{e:Sgamma-gen}
S_\gamma=\pm L\sqrt{E^2-1}+ h_A(\gamma).
\end{equation}

\subsection{The trace formula}
The Guillemin-Uribe trace formula describes the asymptotic expansion, as $N\to\infty$, of the sequence $Y_N$ given by \eqref{e:Yp} with some $E>1$ and $\varphi\in \mathcal S(\RR)$ under the assumption on the associated magnetic geodesic flow to be clean on $X_E$ (cf.  \cite[Corollary 7.2]{Gu-Uribe89} as well as \cite[Theorem 2.1]{Brum-Uribe91}).

\begin{thm}\label{t:varphi}
Assume that the flow $\phi$ is clean on $X_E$ for some $E>1$. Then, for any $\varphi\in \mathcal S(\RR)$, the sequence $Y_N(\varphi)$ admits an asymptotic expansion (the trace formula)
\begin{equation}
\label{traceformula}
Y_N(\varphi)=\sum_{j=0}^{\infty}\varphi(\lambda_{N,j}-EN)\sim \sum_{j=0}^\infty c_j(N,\varphi)N^{d-j},\quad N\to \infty,
\end{equation}
where the coefficients $c_j(N,\varphi)$ are bounded in $N$. Moreover, the following statements about the leading coefficient $c_0$ in the expansion \eqref{traceformula} and the degree $d=d(\varphi)$ hold.
 
(1) If $0$ is a unique period in ${\rm supp}(\hat\varphi)$, where $\hat\varphi$ is the Fourier transform of $\varphi$, then $d=n-1$ and
\begin{equation} \label{e:c0-gen0}
c_0(N,\varphi)=(2\pi)^{-n}\hat{\varphi}(0){\rm Vol}(X_E).
\end{equation}

(2) More generally, assume that there is a unique period $T$ of the flow $\phi$  in the support of $\hat\varphi$. Let $Y_1,\ldots,Y_r$ be the connected components of $\mathcal P_T$ of maximal dimension (denote it by $k$). Then $d=(k-1)/2$, and, for each $j=1,\ldots, r$, there is a density $\alpha_j$ on $Y_j$ defined in terms of the classical dynamics such that
\begin{equation} \label{e:c0-gen1}
c_0(N,\varphi)= \hat\varphi(T)\sum_{j=1}^r e^{\pi im_j/4}e^{-iNS_j}\int_{Y_j}\alpha_j,
\end{equation}
where $m_j = m_{Y_j}$ is the (common) Maslov index of the trajectories in $Y_j$ and $S_j$ is their (common) action.
\end{thm}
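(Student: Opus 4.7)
The plan is to reduce \eqref{traceformula} to a standard Duistermaat-Guillemin clean-intersection trace computation by lifting the whole family $\{\Delta^{L^N}\}_{N\in\NN}$ to a single elliptic operator on the principal $S^1$-bundle $\pi_P : P\to M$ associated with $L$. Writing $\varphi$ via its Fourier transform one has
\begin{equation*}
Y_N(\varphi)=\frac{1}{2\pi}\int_\RR \hat\varphi(t)\,e^{-iENt}\,\Tr U_N(t)\,dt,\qquad U_N(t)=\exp\!\left(it\sqrt{\Delta^{L^N}+N^2}\right).
\end{equation*}
The connection $\nabla^L$ determines a connection form on $P$ whose horizontal Laplacian, together with $-\partial_\theta^2$ along the vertical generator, assembles into a single positive elliptic operator $\Delta_P$ on $L^2(P)$; its restriction to the $N$th isotypic component $L^2(P)_N\cong C^\infty(M,L^N)$ equals $\Delta^{L^N}+N^2$. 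Consequently $U_N(t)$ is the $N$th Fourier mode of the half-wave propagator $U(t)=e^{it\sqrt{\Delta_P}}$, a zero-order Fourier integral operator on $P$.

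The microlocal key point (Boutet de Monvel-Guillemin, Weinstein) is that the principal symbol of $\sqrt{\Delta_P}$ Poisson-commutes with the moment map of the $S^1$-action and that its Hamiltonian flow on $T^*P$ descends through symplectic reduction at momentum $1$ to the magnetic flow $\phi$ on $(T^*M,\Omega)$ restricted to the energy hypersurface $X_E$. Inserting the projector $\Pi_N=(2\pi)^{-1}\int_0^{2\pi}e^{-iN\theta}R_\theta\,d\theta$, where $R_\theta$ is rotation on $P$, one gets
\begin{equation*}
Y_N(\varphi)=\frac{1}{(2\pi)^2}\int_\RR\int_0^{2\pi}\hat\varphi(t)\,e^{-iENt-iN\theta}\,\Tr\bigl(U(t)R_\theta\bigr)\,d\theta\,dt,
\end{equation*}
an oscillatory integral in the large parameter $N$ whose stationary locus corresponds, under the reduction, to the period set $\cP$ at energy $E$.

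Now I would apply Bott's clean intersection / generalized stationary phase theorem. The cleanness hypothesis on $\phi|_{X_E}$ guarantees that the relevant critical set is a smooth manifold and that the transverse Hessian has constant rank. For each connected component $Y_j\subset\cP_T$ of maximal dimension $k$, stationary phase produces a contribution of order $N^{(k-1)/2}$, where the $-1$ absorbs the direction of the orbit along which the phase is stationary by Hamilton's equations. The phase evaluated on $Y_j$ equals $N(TE-S_j)$ with $S_j$ given by \eqref{e:Sgamma}-\eqref{e:Sgamma-gen}, producing the factor $e^{-iNS_j}$; the signature of the transverse Hessian (which one identifies with the symplectic linearization $I-P_j$ of the Poincar\'e map on the symplectic normal of $Y_j$) yields the Maslov factor $e^{\pi im_j/4}$; and the density $\alpha_j$ is the Liouville density on $X_E$ transverse-quotiented by $|\det(I-P_j)|^{1/2}$. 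The $T=0$ component is handled separately: only the short-time Hadamard parametrix for $U(t)$ contributes, and a Weyl-type computation of the half-wave kernel on $X_E$ produces $d=n-1$ and $c_0=(2\pi)^{-n}\hat\varphi(0)\,\mathrm{Vol}(X_E)$.

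The main obstacle is the microlocal bookkeeping. Concretely: (i) proving that $\sqrt{\Delta_P}$ is a bona fide first-order classical pseudodifferential operator and that $U(t)$ belongs to the $S^1$-equivariant Fourier integral operator calculus, so that $\Pi_N U(t)\Pi_N$ has an asymptotic expansion in $1/N$ controlled by the subprincipal symbol; (ii) tracking half-densities and phases through the symplectic reduction $T^*P//S^1\cong(T^*M,\Omega)$, where the twist by $\pi_M^*F$ is precisely what turns the reduced flow into the magnetic flow rather than the geodesic flow; and (iii) pinning down the Maslov and signature conventions so that the leading constants match \eqref{e:c0-gen0} and \eqref{e:c0-gen1} exactly. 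All the ingredients are classical (Duistermaat-Guillemin together with the Boutet de Monvel-Guillemin circle-bundle reduction), but the arithmetic needed to extract the stated normalizations requires care.
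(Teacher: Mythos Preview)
Your circle-bundle reduction is exactly the one Guillemin and Uribe use, and the paper's sketch (Section~2.4) follows it as well: lift the family $\{\Delta^{L^N}\}$ to the single elliptic operator $P=\Delta^{1/2}$ on the unit-circle bundle $S$ (your $P$), where $\Delta=\Delta_{\rm h}-\partial_\theta^2$, so that the joint spectrum of $(P,A)$ with $A=i^{-1}\partial_\theta$ is $\{(\lambda_{k,j},-k)\}$, and then reduce the wave dynamics on $T^*S$ to the magnetic flow on $(T^*M,\Omega)$ by symplectic reduction at moment level $1$.

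The packaging differs in one respect worth noting. You write $Y_N(\varphi)$ directly as an oscillatory integral in the large parameter $N$ (via $\Pi_N$ and the half-wave group $e^{it\sqrt{\Delta_P}}$) and invoke clean stationary phase. The paper instead forms the $2\pi$-periodic generating distribution
\[
Y(s)=\sum_{N\ge 1}Y_N(\varphi)\,e^{iNs}=\Tr\bigl(\varphi(P-EA)\,e^{isA}\bigr),
\]
shows (by FIO arguments of Duistermaat--Guillemin type) that under the cleanness hypothesis the singularities of $Y$ are isolated, classical and positive, located at $s=S_\gamma$, and then invokes a general lemma that reads off the $N^{d-j}$ expansion of the Fourier coefficients $Y_N(\varphi)$ from the singularity profile of $Y$. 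Both routes feed the same Lagrangian distribution into the same clean-intersection calculus; the only difference is whether one extracts the large-$N$ asymptotics directly or via the singularity structure of the generating series. Your route is closer to the Gutzwiller--Meinrenken semiclassical formalism; the paper follows the Colin de Verdi\`ere joint-spectrum route. Either way one obtains the same $d$, $c_0$, actions $S_j$ and Maslov indices $m_j$.
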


If there is a unique periodic trajectory $\gamma\subset X_E$, whose period $T_\gamma$ is in the support of $\hat\varphi$, then $d = 0$ and the assumption on the flow $\phi$ to be clean on $X_E$ implies invertibility of the map $I-P_\gamma$, where $P_\gamma$ denotes the Poincar\'e map of $\gamma$. The density $\alpha$ on $\mathcal P_T$ can be computed explicitly, that results in following formula:
\begin{equation} \label{e:c0-nondegenerate}
c_0(N,\varphi)=\frac{T^\#_\gamma e^{\pi im_\gamma/4}}{2\pi |I-P_\gamma|^{1/2}}e^{-iNS_\gamma}\hat\varphi(T_\gamma)
\end{equation}
where $T^\#_\gamma$ and $m_\gamma$ are  the primitive period and the Maslov index of $\gamma$, respectively.

\subsection{Maslov indices}
Recall the definition of Maslov indices $m_Y$ in the formula \eqref{e:c0-gen1} of Theorem \ref{t:varphi}.

Let $V$ be a symplectic vector space of dimension $2n$. The Lagrangian Grassmannian is the set  $\Lambda=\Lambda(V)$ of Lagrangian subspaces of $V$. It is a regular algebraic subvariety of dimension $n(n+1)/2$ in the Grassmann-variety of all $n$-dimensional linear subspaces of $V$. For any $\lambda\in\Lambda(V)$ and $k=0,1,\ldots, n$, consider the set 
\[
\Lambda^k(\lambda)=\{\mu\in\Lambda(V) :\dim \lambda\cap \mu=k\}.
\] 
$\Lambda^0(\lambda)$ is an affine space associated with the vector space $S^2V$ --- the symmetric tensor power of $V$, $\Sigma(\lambda):=\Lambda\setminus \Lambda^0(\lambda)$  is an algebraic subvariety of codimension 1 in $\Lambda(V)$, whose regular part is $\Lambda^1(\lambda)$ and singular part is $\cup_{k^\prime\geq 2}\Lambda^{k^\prime}(\lambda)$ of codimension $3$ in $\Lambda$. Moreover, $\Lambda^1(\lambda)$ is oriented and connected. The variety $\Sigma(\lambda)$ defines an oriented cycle of codimension 1 in $\Lambda$, independent of the choice of $\lambda\in\Lambda$ and Poincar\'e dual to the generator of $H^1(\Lambda,\mathbb Z)\cong \mathbb Z$. If $\omega: S^1\to \Lambda$ is a continuous loop in $\Lambda$, then its Maslov index $[\omega]$ is defined to be the intersection number of $\omega$ with $\Sigma(\lambda)$. The class $[\omega]$ is independent of the choice of $\lambda$.

The definition of the Maslov index can be extended to non-closed paths in the following way. Let $\lambda\in\Lambda$ and let $\omega : [0,T]\to \Lambda$ be a curve $\omega : [0,T]\to \Lambda$ such that $\omega(0)$ and $\omega(T)$ are complimentary to $\lambda$ and $\omega(0), \omega(T) \in \Lambda^0(\lambda)$. Define the intersection number $[\omega:\lambda]$ of $\omega$ with $\lambda$ in the following way. Let $\omega^\#$ be the loop in $\Lambda$ consisting of $\omega$ followed by any path $\omega^\prime$ in $\Lambda^0(\lambda)$ from $\omega(T)$ to $\omega(0)$. Then
\begin{equation}\label{e:index}
[\omega:\lambda]=[\omega^\#].
\end{equation}
Since $\Lambda^0(\lambda)$ is simply connected, this definition is independent of the choice of $\omega^\prime$. 

Now let us go back to the situation considered in the previous subsection.

Let $(M,g)$ be a compact Riemannian manifold of dimension $n$ and let $(L,h^L)$ be a Hermitian line bundle on $M$ with a Hermitian connection $\nabla^L$. Let $\Pi : S\to M$ be the orthonormal frame bundle of $L$: $S=\{x\in L : |x|_{h^L}=1\}$. It is a principal $S^1=\RR/2\pi\ZZ$-bundle on $M$. We will denote by $e^{i\theta}\cdot x$ the action of $\theta\in S^1$ on $x\in S$ given by the complex multiplication in the fibers of $L$. Let $\partial/\partial\theta$ denote the infinitesimal generator of the $S^1$-action on $S$.

The connection $\nabla^L$ induces a connection on the principal bundle $\Pi : S\to M$. There exists a unique $S^1$-invariant Riemannian metric on $S$ such that $\Pi$ is a Riemannian submersion: the length of $\partial/\partial\theta$ equals 1, the horizontal subspaces of the connection on $S$ are orthogonal to the fibers of $\Pi$ and are mapped isometrically by the map $\Pi$ to the corresponding tangent spaces of the manifold $M$. The induced map $T^*\Pi : T^*S\to T^*M$ is well-defined. The magnetic geodesic flow $\phi$ on $T^*M$ coincides with the Hamiltonian reduction of the Riemannian geodesic flow $f$ on $T^*S$.

For $E>1$, put 
\[
W=\left\{\nu \in T^*S : \|\nu\|=E, \left\langle \nu, \frac{\partial}{\partial\theta}\right\rangle=1\right\}.
\] 
Denote by $\pi : Z \to T^*M$ the restriction of $\Pi$ to 
\[
Z=\left\{\nu \in T^*S : \left\langle \nu, \frac{\partial}{\partial\theta}\right\rangle=1\right\}.
\] 
It is easy to see that $\pi : Z\to X$ and $\pi_E : W\to X_E$ are principal $S^1$-bundles. 

Let $T$ be a period of the flow $\phi$ on $X_E$, let $Y$ be a connected component of $\mathcal P_T= \{x\in X_E : \phi^T(x)=x\}$ of dimension $k$ and $z\in Y$. One can show (see \cite[ Lemma 4.1]{Gu-Uribe89}) that there is $\theta \in \RR$ such that, for any $\nu\in \pi_E^{-1}(z)$, we have 
\[
e^{i\theta}\cdot f^T(\nu)=\nu.
\] 
Choose some $\nu\in \pi_E^{-1}(z)$ and put $\mu=f^T(\nu)$. Thus, $\mu\in \pi^{-1}(z)$ and $e^{i\theta}\cdot \mu=\nu$. Choose local coordinate system with coordinates $x\in \RR^n$ in a neighborhood $U$ of $\pi(z)\in M$ and denote by $\xi\in \RR^n$ the dual coordinates in the cotangent bundle $T^*M$. We can assume that a trivialization of the bundle $\pi_E$ is defined over $U$. Thus, $\pi^{-1}(U)\cong U\times S^1$ is a coordinate neighborhood of $z$ with the coordinates  $(x,\theta)\in U\times S^1$ and the dual coordinates $(\xi, \tau)\in \RR^{n+1}$ in the cotangent bundle $T^*S$. The choice of coordinates determines horizontal Lagrangian subspace $H_{\nu}$ of $T_\nu(T^*S)$ defined by the equations $\delta \xi=0,\delta \tau=0$. Consider the curve $\omega$ in the Lagrangian Grassmanian $\Lambda(T_\nu(T^*M))$ defined by 
\begin{equation}\label{e:vert-f}
\omega(t)=df^{-t}(V_{f^t(\nu)}), \quad 0\leq t\leq T,
\end{equation}
where $V_{f^t(\nu)}\subset T_{f^t(\nu)}(T^*S)$ is the vertical subspace. Denote by $\kappa$ the intersection number of the curve $\omega$ with $H_\nu\in \Lambda(T_\nu(T^*S))$ defined by \eqref{e:index}:
\begin{equation}\label{e:kappa}
\kappa=[\omega:H_\nu].
\end{equation}
Let $\chi(x,\eta)$ be the generating function of the canonical transformation $f^T$ of $T^*S$. It means that 
$\det(\partial^2\chi/\partial x\partial\eta)\neq 0$ and in some neighborhood of $(\nu,\mu)\in T^*S\times T^*S$ the graph $C$ of $f^T$ has the form
\[
C=\{(x,\chi^\prime_x,\chi^\prime_\eta,\eta)\in T^*S\times T^*S\},
\]
i.e., $f_T(x,\chi^\prime_x)=(\chi^\prime_\eta,\eta)$. Consider the matrix
\begin{equation}\label{e:R}
R=\begin{pmatrix}
d^2_{xx}\chi & d^2_{x\eta}\chi & -1 \\
d^2_{x\eta}\chi & d^2_{\eta\eta }\chi & 0 \\
-1 & 0 & 0
\end{pmatrix}.
\end{equation}

The integer $m_Y$ associated to the connected component $Y$ is given by
\begin{equation}\label{e:m}
m_Y = {\rm sgn}\,R+2\kappa.
\end{equation}
This formula agrees with a more general definition given in  \cite[Proposition 3.7]{Gu-Uribe89}.

\subsection{On the proof of the trace formula}
In this section, we will briefly describe the main ideas of the paper of V. Guillemin and A. Uribe, which is based on a further elaboration of the paper by Y. Colin de Verdi\`ere   \cite{ColinDeV79}.

Let $E>1$ and $\varphi\in \mathcal S(\RR)$. Consider the distribution $Y\in \mathcal D^\prime(\mathbb R)$ given by
\begin{equation}\label{e:YE}
Y(s)=\sum_{N=1}^{\infty}Y_N(\varphi)e^{iNs}=\sum_{N=1}^{\infty}\sum_{j=0}^{\infty}\varphi(\lambda_{N,j}-EN)e^{iNs}.
\end{equation}
It is a $2\pi$-periodic distribution 
\[
Y(s+2\pi)=Y(s)
\] 
and belongs to the generalized Hardy space, i.e., its Fourier series involves only positive frequencies. 

The key idea is to interpret $N$ as an eigenvalue of the operator $i^{-1}\partial_\theta$ on the unit circle $S^1=\RR/2\pi \ZZ$ with the coordinate $\theta$. In local coordinates, this means that it is supposed to consider instead of the operator  \eqref{e:DLp} a so called horizontal Laplacian $\Delta_{\rm h}$ on $M\times S^1$ given by 
\begin{multline*}
\Delta_{\rm h}=-\frac{1}{\sqrt{|g(x)|}}\sum_{1\leq j,\ell\leq n}\left(\frac{\partial}{\partial x^j}-A_j(x)\frac{\partial}{\partial \theta}\right)\times \\ \times \left[\sqrt{|g(x)|}
g^{j\ell}(x) \left(\frac{\partial}{\partial
x^\ell}-A_\ell(x)\frac{\partial}{\partial \theta}\right)\right].
\end{multline*}

In the global setting we consider the principal $S^1$-bundle associated with $L$: 
\[
S=\{x\in L : |x|_{h^L}=1\}.
\] 
The connection on $L$ gives rise to a connection on $S$, i.e. an $S^1$-invariant distribution, transversal to the fibers of the fibration $\Pi : S\to M$ (the horizontal distribution of the connection). The horizontal Laplacian $\Delta_{\rm h}$ is naturally determined by the horizontal distribution and the Riemannian metric on $M$.

The eigenvalues of $\Delta_{\rm h}$ are described as follows (cf., for instance, \cite[Lemma 5.4]{Gu-Uribe86}). For any $k\in \ZZ$, consider the space $E_k$ of smooth functions on $S$ such that $f(e^{i\theta}\cdot x)=e^{-ik\theta}f(x)$ for any $x\in S$ and $\theta\in S^1$. Functions from $E_k$ can be identified with sections of the vector bundle $S\times_{S^1}\CC\cong L^k$ on $M$ associated with the principal bundle $\Pi : S\to M$ and the unitary representation $S^1\ni \theta \mapsto e^{ik\theta}\in \CC$ of the group $S^1$. Under the isomorphism $E_k\cong C^\infty(M,L^k)$, the restriction of $\Delta_{\rm h}$ to the subspace $E_k$ corresponds to the magnetic Laplacian $\Delta^{L^k}$. Therefore, the eigenvalues of $\Delta_{\rm h}$ are of the form $\{\nu_{k,j}, j\in \NN, k\in \ZZ\}$. The operator $\Delta_{\rm h}$ is not an elliptic operator, therefore, it is more convenient to replace it with an elliptic operator.

Let $\partial/\partial\theta$ be the infinitesimal generator of the $S^1$-action on $S$. Fix an $S^1$-invariant Riemannian metric on $S$ such that $\Pi : S\to M$ is a Riemannian submersion and, moreover, the horizontal subspaces of the connection on $S$ are orthogonal to the fibers of the fibration $\Pi$. Let $\Delta$ be the associated Laplace-Beltrami operator on $S$. It has the form
\[
\Delta=\Delta_{\rm h}-\frac{\partial^2}{\partial \theta^2}.
\]
Consider the operator $P=\Delta^{1/2}$, which is a first order elliptic pseudodifferential operator on $S$. Its eigenvalues have the form $\{\lambda_{k,j}, j\in \NN, k\in \ZZ\}$ (cf. \eqref{e:def-lambda}). Moreover, if $A$ is the first order differential operator 
\[
A=\frac{1}{i}\frac{\partial}{\partial \theta},
\] 
then its eigenvalues are integers, and the eigenspace, corresponding to the eigenvalue $k\in \ZZ$, is $E_{-k}$. The operators $P$ and $A$ commute, and their joint eigenvalues are of the form  $(\lambda_{k,j},-k)$.

The distribution $Y\in \mathcal D^\prime(\RR)$ can be interpreted as the distributional trace of the operator $\varphi(P-EA)e^{is A}$: for any test function $f\in C^\infty_c(\RR)$,
\[
\langle Y, f\rangle =\operatorname{tr} \int_{-\infty}^{+\infty}\varphi(P-EA)e^{is A}f(s)ds=2\pi\operatorname{tr} \varphi(P-EA)\check{f}(A),
\]
where $\check{f}$ denotes the inverse Fourier transform of $f$. Here the operator $P-EA$ is not necessarily elliptic. Therefore, the operator $\varphi(P-EA)$ is not in general of trace class. But the operators $P-EA$ and $A$ are commuting jointly elliptic operators on $S$, that allows one to prove that the operator $\varphi(P-EA)\check{f}(A)$ is a smoothing operator and, therefore, its trace is well-defined.

In \cite{Gu-Uribe89},  V. Guillemin and A. Uribe made an analysis of the distribution $Y$  in the spirit of the proof of the Duistermaat-Guillemin trace formula  \cite{DG75}. First of all, they gave the following description of the singularities of the distribution $Y$ (see \cite[Theorem 6.9]{Gu-Uribe89}).

\begin{thm}\label{t:singY}
If $Y$ is not smooth in $s\in \mathbb R$, then there exists a closed curve $\gamma$ in $M$, satisfying the equation \eqref{e:magnetic} and the condition
$\|\dot{\gamma}\|=c:=\sqrt{E^2-1}$, such that 
\[
s=S_\gamma.
\]
\end{thm}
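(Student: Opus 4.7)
The plan is to realize $Y$ as a distributional trace and then apply the microlocal calculus of Fourier integral operators in the spirit of Duistermaat--Guillemin and Colin de Verdi\`ere. Starting from the identity
\[
Y(s)=\operatorname{tr}\bigl[\varphi(P-EA)\,e^{isA}\bigr]
\]
recalled just above the theorem, I would Fourier-invert $\varphi$ to obtain
\[
Y(s)=\frac{1}{2\pi}\int_{\RR}\hat\varphi(t)\,\operatorname{tr}\bigl[e^{itP}e^{i(s-Et)A}\bigr]\,dt.
\]
Joint ellipticity of $(P,A)$, with $P$ an elliptic pseudodifferential operator commuting with $A$, makes the kernel of $\varphi(P-EA)\check f(A)$ smooth for every $f\in C^\infty_c(\RR)$, so the manipulations above are justified and the singular support of $Y$ is controlled by that of $Z(t,\alpha):=\operatorname{tr}[e^{itP}e^{i\alpha A}]$ under the affine substitution $\alpha=s-Et$.

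Next I would analyse $Z$ microlocally. The wave group $e^{itP}$ is a Fourier integral operator on $S$ associated with the cogeodesic flow $f^t$ on $T^*S\setminus 0$, while $e^{i\alpha A}$ is the quantisation of the cotangent lift $R_\alpha^*$ of the $S^1$-action on $S$. Since $P$ and $A$ commute, the composition $e^{itP}e^{i\alpha A}$ is a Fourier integral operator whose canonical relation is the graph of $R_\alpha^*\circ f^t$. The standard trace lemma for FIOs (as in \cite{DG75,Gu-Uribe89}) then implies that $Z$ is $C^\infty$ at $(t_0,\alpha_0)$ unless this graph meets the diagonal of $T^*S\setminus 0$, i.e.\ unless there exists $\nu_0\in T^*S\setminus 0$ with
\[
R_{\alpha_0}^*\,f^{t_0}(\nu_0)=\nu_0.
\]
Consequently, $Y$ is smooth at $s_0$ unless such a triple $(t_0,\alpha_0,\nu_0)$ exists with $\alpha_0=s_0-Et_0$.

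The next step is energy localisation and Hamiltonian reduction. The cut-off $\varphi(P-EA)$ microlocally confines $\nu_0$ to the characteristic set $\{\|\nu\|=E\langle\nu,\partial/\partial\theta\rangle\}$ of $P-EA$; rescaling along the $\RR_+$-action places $\nu_0$ on the set $W=\{\|\nu\|=E,\langle\nu,\partial/\partial\theta\rangle=1\}$ introduced in \S2.3. The Hamiltonian reduction recalled there pushes the closing condition on $S$ down to $\phi^{t_0}(\pi_E(\nu_0))=\pi_E(\nu_0)$ on $X_E$. After the $E$-dependent time rescaling relating $H$ and $\cH$, the base curve $\gamma=\pi_M\circ\pi_E\circ f^{\cdot}(\nu_0)$ is a closed solution of the magnetic geodesic equation \eqref{e:magnetic} with $\|\dot\gamma\|=c=\sqrt{E^2-1}$.

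The remaining point is the identification $s_0=S_\gamma$, which emerges from the phase of $Z$ at the critical point. Using Euler's identity for the degree-one homogeneous symbol of $P$, the integral of the canonical 1-form along the closed lift in $T^*S$ equals $Et_0$; Hamiltonian reduction splits this integral into a horizontal piece $\int_{\pi_M\circ\gamma}\eta=\pm L\sqrt{E^2-1}$ and a vertical piece which, modulo $2\pi$, is precisely the holonomy $h_A(\gamma)$ of $\nabla^L$ along $\gamma$. Setting $\alpha_0=s_0-Et_0$ and comparing with \eqref{e:Sgamma-gen} yields $s_0=\pm L\sqrt{E^2-1}+h_A(\gamma)=S_\gamma$, as required. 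The main obstacle is precisely this last bookkeeping step: one must carefully track orientations, the $E$-dependent reparameterisation between the $P$-time on $S$ and the $\cH$-time on $X_E$, and the identification of the rotation angle $\alpha_0\in\RR/2\pi\ZZ$ with the holonomy $h_A(\gamma)$, so that the geometric action of \eqref{e:Sgamma-gen} is reproduced rather than merely a congruence class of it.
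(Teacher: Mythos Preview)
Your proposal is correct and follows exactly the approach that the paper outlines: the paper itself does not prove Theorem~\ref{t:singY} but merely sets up the operators $P$ and $A$ on the circle bundle $S$, gives the trace interpretation of $Y$, invokes the Duistermaat--Guillemin FIO calculus, and then cites \cite[Theorem~6.9]{Gu-Uribe89} for the result. Your outline---Fourier-inverting $\varphi$, analysing the two-parameter trace $Z(t,\alpha)=\operatorname{tr}[e^{itP}e^{i\alpha A}]$ as a Fourier integral distribution with singularities governed by the closing condition $R_\alpha^* f^t(\nu)=\nu$, localising to $W$ via the characteristic set of $P-EA$, reducing to $X_E$, and identifying the phase with $S_\gamma$ through \eqref{e:Sgamma-gen}---is a faithful and more detailed rendering of the same argument.
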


\begin{defn}
Let $Y$ be a distribution on the real line and let $s\in \mathbb R$ be a singularity of $Y$. We will say that $s$ is a classical singularity of degree $d$ if
\begin{itemize}
\item  it is an isolated singular point;
\item for any $\rho\in C^\infty_c(\mathbb R)$ such that $\rho\equiv 1$ in a neighborhood of $s$ and there is no other singularity of $Y$ in the support of $\rho$, we have 
\[
\widehat{\rho Y}(\xi)\sim e^{-is\xi}\sum_{j=0}^\infty c^{\pm}_j\xi^{d-j}, \quad \xi\to \pm \infty.
\]
\end{itemize}

If, in addition, $c_j^-=0$ for all $j\geq 0$, we will say that such a classical singularity is positive.
\end{defn}

\begin{thm}{\cite[Theorems 6.9, 4.7]{Gu-Uribe89}}\label{t:1.3}
If the flow $\phi$ is clean on $X_E$, then the singularities of $Y$ are classical and positive.
\end{thm}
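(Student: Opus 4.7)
The plan is to follow the Duistermaat--Guillemin strategy, adapted to the equivariant $S^1$-setting on the frame bundle $\Pi:S\to M$ described in the previous subsection. Starting from the distributional trace identity
\[
\langle Y, f\rangle = 2\pi\,\tr\,\bigl[\varphi(P-EA)\,\check f(A)\bigr],\qquad f\in C_c^\infty(\RR),
\]
I would use the joint ellipticity of the commuting pair $(P,A)$ on $S$ to ensure the operator on the right is smoothing, then apply Fourier inversion $\varphi(P-EA)=\tfrac{1}{2\pi}\int\hat\varphi(t)\,e^{it(P-EA)}\,dt$ together with $[P,A]=0$ to recast
\[
\widehat{\rho Y}(\xi)=\frac{1}{2\pi}\iint \rho(s)\,e^{-is\xi}\,\hat\varphi(t)\,\tr\bigl[e^{itP}\,e^{i(s-tE)A}\bigr]\,dt\,ds,
\]
where $e^{itP}$ is the wave group on $S$, an FIO associated with the Riemannian geodesic flow $f^t$ on $T^*S$, and $e^{i\sigma A}$ is the unitary action of the rotation by $\sigma$ in the fibres of $\Pi$.

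Positivity is the easier half. After localizing with a cutoff $\rho\in C_c^\infty(\RR)$ supported near a candidate singular point $s_0$,
\[
\widehat{\rho Y}(\xi)=\sum_{N=1}^{\infty} Y_N(\varphi)\,\hat\rho(\xi-N),
\]
and since $\varphi\in\mathcal S(\RR)$ and $\{\lambda_{N,j}\}$ satisfies a polynomial Weyl bound in $N$, the sequence $Y_N(\varphi)$ is polynomially bounded in $N$. The Schwartz decay of $\hat\rho$ together with the restriction $N\geq 1$ then forces $\widehat{\rho Y}(\xi)=O(|\xi|^{-\infty})$ as $\xi\to-\infty$, which is precisely $c_j^-=0$ for all $j$.

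For the classical expansion as $\xi\to+\infty$, the distribution $\tr[e^{itP}e^{i(s-tE)A}]$ in $(t,s)$ would be analysed as in \cite{DG75}: its singular support consists of the pairs $(t,s)$ for which there exists $\nu\in T^*S$ with $f^t(\nu)=e^{i(tE-s)}\cdot\nu$ and $\langle\nu,\partial/\partial\theta\rangle=1$. Under the symplectic reduction identifying $X_E$ with $W/S^1$, such $\nu$ descend bijectively to periodic points of the magnetic flow $\phi$ on $X_E$ with period $T=t$ and action $s=S_\gamma$, exactly matching the singular locus given by Theorem \ref{t:singY}. The cleanness of $\phi$ on $X_E$ then lifts, via this reduction, to a clean-intersection condition for the canonical relations of the FIO composition, and the generalized stationary phase theorem (H\"ormander, Duistermaat--Guillemin, Melin--Sj\"ostrand) produces a classical asymptotic expansion of $\widehat{\rho Y}(\xi)$ in powers of $\xi$ with leading exponent $(k-1)/2$, where $k$ is the dimension of a maximal connected component of $\mathcal P_T$.

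The main obstacle will be that $P-EA$ is \emph{not} elliptic: its principal symbol $(|\xi_h|_g^2+\tau^2)^{1/2}-E\tau$ vanishes precisely along $\RR_+\cdot W$, so $\varphi(P-EA)$ alone is not of trace class and the extra factor $\check f(A)$ (equivalently, the positive-frequency localization built into $Y$) is indispensable. The delicate technical point is to keep the oscillatory phase in a form to which the clean composition formula applies while using only the joint $(P,A)$-ellipticity; a secondary bookkeeping issue is pinning down the Maslov contribution, so that the vertical Lagrangian \eqref{e:vert-f} and the reference horizontal subspace $H_\nu$ enter the stationary-phase index exactly as in the formula \eqref{e:m}, and this in turn requires carrying the symplectic reduction from $W$ down to $X_E$ all the way through the phase-function calculus rather than only at the level of the underlying manifolds.
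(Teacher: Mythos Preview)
Your proposal is correct and follows essentially the same approach the paper sketches in Section~2.4 (which in turn summarizes the original argument of \cite{Gu-Uribe89}): lift to the frame bundle $S$, replace the non-elliptic horizontal Laplacian by the elliptic $P=\Delta^{1/2}$, exploit the joint ellipticity of the commuting pair $(P,A)$ to make $\varphi(P-EA)\check f(A)$ smoothing, and then run the Duistermaat--Guillemin FIO/clean-intersection analysis on $\tr[e^{itP}e^{i(s-tE)A}]$. Your positivity argument via the one-sided Fourier series $Y(s)=\sum_{N\ge1}Y_N(\varphi)e^{iNs}$ and Schwartz decay of $\hat\rho$ is exactly how positivity comes out, and you have correctly flagged the two genuine technical issues (non-ellipticity of $P-EA$ and tracking the Maslov data through the reduction $W\to X_E$).
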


The character of singularities of $Y$ reflects in the asymptotic behavior of its Fourier coefficients $Y_N(\varphi)$. This is stated more precisely in the following lemma, which allows one to derive from Theorem  \ref{t:1.3} the asymptotic expansion of the sequence $Y_N(\varphi)$ as $N\to\infty$.

Consider a $2\pi$-periodic distribution 
\[
Y(s)=\sum_{N=0}^\infty a_Ne^{iNs}.
\]
Suppose that $Y$ has finitely many singular points $\omega_1,\ldots,\omega_K$ on the circle $S^1$, which are positive classical singularities. For every $j=1,\ldots,K$, choose $u_j$ such that $-iu_j=\log \omega_j$ and denote by $d_j$ the degree of the corresponding singularity. By assumption, if $\rho_j$ is a cut-off function near $u_j$, we have an asymptotic expansion of the form
\[
\widehat{\rho_jY}(\xi)\sim \sum_{l=0}^\infty \alpha_j(l)\xi^{d_j-l}.
\]

\begin{lem}{\cite[Lemma 7.1]{Gu-Uribe89}}
As $N\to \infty$, the Fourier coefficients of $Y$ admit the asymptotic expansion 
\[
a_N\sim \sum_{l=0}^\infty\sum_{j=1}^K\alpha_j(l)\omega_j^{-N}N^{d_j-l}.
\]
\end{lem}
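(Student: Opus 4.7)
The plan is to localize $Y$ near each singular point by a partition of unity on $S^1$, pass to compactly supported pieces, and apply the classical-singularity hypothesis termwise. I would choose a smooth partition of unity $1 = \chi_\infty + \sum_{j=1}^K \chi_j$ on $S^1$ in which $\chi_j$ equals $1$ on a small neighborhood of $\omega_j$ and is supported in an arc containing no other singular point, while $\chi_\infty$ vanishes in neighborhoods of all the $\omega_j$. Since the only singularities of $Y$ are the points $\omega_1,\ldots,\omega_K$, the distribution $\chi_\infty Y$ is smooth on $S^1$, and its Fourier coefficients decay faster than any negative power of $N$.

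\textbf{Local contributions and assembly.} The Fourier coefficient of $Y$ splits as
\[
2\pi a_N = \sum_{j=1}^K \widehat{\chi_j Y}(N) + \widehat{\chi_\infty Y}(N),
\]
where the last term is $O(N^{-\infty})$. For each $j$, lift $\chi_j Y$ to a compactly supported distribution on $\RR$ near $u_j$; then $\chi_j$ is an admissible cutoff in the definition of a classical singularity at $u_j$. By positivity, the asymptotic as $\xi\to -\infty$ is trivial, and one reads off
\[
\widehat{\chi_j Y}(\xi) \sim e^{-iu_j \xi}\sum_{l=0}^\infty c_j(l)\,\xi^{d_j - l}, \qquad \xi\to +\infty.
\]
Setting $\xi=N$ and using the relation between $u_j$ and $\omega_j$ fixed in the statement, the phase $e^{-iu_j N}$ equals $\omega_j^{-N}$, so the local contribution is $\widehat{\chi_j Y}(N)\sim \omega_j^{-N}\sum_{l\ge 0} c_j(l)\,N^{d_j - l}$. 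Summing over $j$ and absorbing $1/(2\pi)$ into the coefficients by setting $\alpha_j(l)=c_j(l)/(2\pi)$ produces
\[
a_N \sim \sum_{l=0}^\infty \sum_{j=1}^K \alpha_j(l)\,\omega_j^{-N}\,N^{d_j - l}, \qquad N\to\infty,
\]
as required.

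\textbf{Potential obstacles.} Two technical points require attention. First, the expansion must not depend on the choice of cutoffs, which is automatic because two admissible choices of $\chi_j$ differ by smooth functions on $S^1$ whose Fourier coefficients are rapidly decreasing and so do not affect the polynomial-times-oscillation terms in the expansion. Second, and this is the closest thing to a substantive obstacle, one must carefully reconcile the two Fourier conventions (Fourier series on $S^1$ versus distributional Fourier transform of a compactly supported distribution on $\RR$) and correctly match the oscillatory phase $e^{-iu_j \xi}$ from the local description of the singularity with the factor $\omega_j^{-N}$ in the conclusion. The positivity hypothesis is essential here: without it the local expansion would also contribute a non-trivial $\xi\to -\infty$ tail, which would be incompatible with $Y$ having only positive Fourier frequencies and would spoil the one-sided asymptotics.
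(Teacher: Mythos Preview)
The paper does not supply a proof of this lemma at all: it is simply quoted from \cite[Lemma~7.1]{Gu-Uribe89}, so there is no ``paper's own proof'' to compare against. Your argument is the standard one and is essentially correct: localize by a partition of unity on $S^1$, observe that the piece supported away from all singularities is smooth and hence has rapidly decreasing Fourier coefficients, lift each localized piece to a compactly supported distribution on $\RR$, and read off the asymptotics of its Fourier transform at integer arguments from the classical-singularity hypothesis.

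Two small remarks. First, your claim that positivity is \emph{essential} overstates matters: since $a_N$ is recovered by evaluating $\widehat{\chi_j Y}(\xi)$ at $\xi=N\to+\infty$, only the $\xi\to+\infty$ branch of the classical expansion is ever used, and the lemma would hold verbatim with the $c_j^+$ coefficients even without positivity. Positivity reflects the Hardy-space structure of $Y$ but is not what makes the one-sided asymptotics work. Second, the bookkeeping of the factor $2\pi$ and of the phase $e^{-iu_jN}$ versus $\omega_j^{-N}$ depends on the precise conventions for $\log\omega_j$ and for the Fourier transform; you are right to flag this, and in fact the paper's display immediately preceding the lemma omits the factor $e^{-iu_j\xi}$ that appears in the definition of a classical singularity, so one should not try to match constants too literally against the text here.
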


\subsection{Periodic flows}
The case of periodic magnetic flows was considered in \cite[Section 4]{Brum-Uribe91}. Assume that, for some $E>1$, the manifold $X_E$ is connected, and each point in $X_E$ is periodic with minimal period $T>0$. For any $x\in X_E$, denote by $\gamma_x$ the trajectory $\phi^t(x), t\in [0,T]$. One can show that the action $x\mapsto e^{iS_{\gamma_x}}$ is constant on $X_E$. Assume that the trajectories $\gamma_x$ satisfy the Bohr-Sommerfeld quantization condition:
\[
e^{iS_{\gamma_x}}=1.
\]
It follows from Theorem \ref{t:singY} that the singularities of the distribution $Y\in \mathcal D^\prime(\mathbb R)$ given by \eqref{e:YE} have the form $s=2\pi k, k\in \ZZ$.

\begin{thm}{\cite[Theorem 4.8]{Brum-Uribe91}} \label{t:periodic}
Suppose that an energy level $E$ satisfies the above conditions. 
There are constants $C_1, C_2>0$ and an integer $\sigma$ such that, if $N$ is sufficiently large, the following inequality holds:
\[
\#\left\{j: \left|\lambda_{N,j}-EN-\frac{\pi\sigma}{2T}\right|\leq C_1\right\}\leq \#\left\{j: \left|\lambda_{N,j}-EN-\frac{\pi\sigma}{2T}\right|\leq C_2N^{-1}\right\}.
\]
Denote by $J(N)$ the set of indices on the right hand side of this inequality: 
\[
J(N)=\left\{j: \left|\lambda_{N,j}-EN-\frac{\pi\sigma}{2T}\right|\leq C_2N^{-1}\right\}.
\]
Then, for large $N$, the cardinality of $J(N)$ is a polynomial of $N$ of degree $n-1$, whose leading coefficient equals the Liouville measure of $X_E$ divided by $(2\pi)^n$.
\end{thm}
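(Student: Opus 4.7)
The argument has two main stages: first, establish that the joint eigenvalues of $(P-EA, A)$ cluster tightly around the points $\pi\sigma/(2T) + 2\pi k/T$; second, count the eigenvalues in one such cluster using Theorem \ref{t:varphi}. Under the hypothesis that every point of $X_E$ is periodic with minimal period $T$, the fixed-point set $\mathcal{P}_T$ coincides with $X_E$ and has the maximal possible dimension $2n-1$. Along the connected manifold $X_E$ both the phase $\theta$ appearing in $e^{i\theta}\cdot f^T(\nu)=\nu$ and the Maslov index $m_{X_E}$ are constant; denote the latter by $\sigma$. The Bohr--Sommerfeld condition $e^{iS_{\gamma_x}}=1$ is exactly the statement $\theta\in 2\pi\ZZ$, so that $f^T$ restricts to the identity on the hypersurface $W\subset T^*S$.

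For the clustering step, I would analyse the one-parameter unitary group $U(t)=e^{it(P-EA)}$ on the circle bundle $S$. Its underlying canonical transformation is the Hamiltonian reduction of the geodesic flow $f^t$ to the level set $W$, and the preceding observation shows that at $t=T$ this transformation is the identity near $W$. Consequently $U(T)$ is microlocally (in a conic neighbourhood of $W$) a zeroth-order pseudodifferential operator whose principal symbol is the scalar $e^{i\pi\sigma/2}$. A standard spectral argument then shows that any joint eigenvalue $(\lambda_{N,j}-EN, N)$ of $(P-EA, A)$ lying in a fixed compact horizontal strip must satisfy $T(\lambda_{N,j}-EN)\equiv \pi\sigma/2 \pmod{2\pi}$ up to an $O(N^{-1})$ error. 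This is the clustering statement and in particular yields, for suitably chosen $C_1$ and $C_2$, the asserted inclusion of eigenvalue sets.

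For the counting step, I would apply Theorem \ref{t:varphi}(1) with a Schwartz function $\varphi$ whose Fourier transform $\hat\varphi$ is supported in a small neighbourhood of the origin (so the only relevant period is $0$) and which is identically $1$ on $[\pi\sigma/(2T)-C_1,\pi\sigma/(2T)+C_1]$. By the clustering just established, for all sufficiently large $N$
\[
Y_N(\varphi)=\#J(N)+O(N^{-\infty}),
\]
while the trace formula gives $Y_N(\varphi)\sim (2\pi)^{-n}\hat\varphi(0)\operatorname{Vol}(X_E)\,N^{n-1}$ at leading order. A refinement of the expansion summing the contributions from all periods $kT$, which by Bohr--Sommerfeld all collapse to the same cluster, shows that the full asymptotic of $Y_N(\varphi)$ is in fact polynomial in $N$. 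This identifies $\#J(N)$ as a polynomial of degree $n-1$ in $N$ with leading coefficient $\operatorname{Vol}(X_E)/(2\pi)^n$.

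The principal difficulty is the microlocal normal form for $U(T)$ near $W$: one has to verify that the canonical relation of $U(T)$ is indeed the diagonal and to compute the principal symbol while correctly tracking the Maslov class along the flow, the latter contributing the phase $e^{i\pi\sigma/2}$. Once this microlocal ingredient is secured, the clustering and the polynomial count follow by fairly standard semiclassical spectral asymptotics.
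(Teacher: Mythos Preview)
The paper does not prove Theorem~\ref{t:periodic}; it is stated there as a citation of \cite[Theorem~4.8]{Brum-Uribe91}, with no argument given. So there is no ``paper's own proof'' to compare against. Your two-stage strategy---show that $U(T)=e^{iT(P-EA)}$ is microlocally a scalar pseudodifferential operator of order zero near $W$ with principal symbol $e^{i\pi\sigma/2}$, deduce $O(N^{-1})$-clustering of the joint spectrum, and then count a cluster via the trace formula---is indeed the approach of Brummelhuis--Uribe (building on Colin de Verdi\`ere and Guillemin--Uribe), so in spirit your outline matches the source.

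There is, however, a genuine slip in your counting step. You ask for $\varphi\in\mathcal S(\RR)$ with $\hat\varphi$ compactly supported near $0$ \emph{and} $\varphi\equiv 1$ on the interval $[\pi\sigma/(2T)-C_1,\pi\sigma/(2T)+C_1]$. By Paley--Wiener, compact support of $\hat\varphi$ forces $\varphi$ to be entire; an entire function equal to $1$ on an interval is identically $1$, which is not in $\mathcal S(\RR)$. The standard repair is to exploit the clustering you have already established: choose any $\varphi$ with $\hat\varphi\in C^\infty_c(-T,T)$, and use that every $\lambda_{N,j}$ contributing to $Y_N(\varphi)$ lies within $C_2N^{-1}$ of one of the lattice points $\pi\sigma/(2T)+2\pi k/T$. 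A Taylor expansion of $\varphi$ about those points then expresses $Y_N(\varphi)$ as $\varphi(\pi\sigma/(2T))\,\#J(N)$ plus lower-order terms, and varying $\varphi$ (or a simple two-sided Tauberian comparison with nonnegative $\varphi$) extracts $\#J(N)$ itself. With this adjustment your sketch is sound; the microlocal normal form you flag as the main difficulty is exactly where the work lies in \cite{Brum-Uribe91}.
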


Moreover, one can claim that there is an integer $N_0$ such that, for sufficiently large $N$, the cardinality of $J(N)$ is equal to $P(N+N_0)$, where $P$ is the Riemann-Roch polynomial of the symplectic manifold  $X_E/S^1_T$, where the group $S^1_T=\RR/T\ZZ$ acts on $X_E$ via the flow $\phi^t$.

Finally, by \cite[Proposition 4.10]{Brum-Uribe91}, for any function $\varphi\in C^\infty_c(\mathbb R)$, there is an asymptotic expansion
\[
\sum_{j\in J(N)}\varphi\left(N\left(\lambda_{N,j}-EN-\frac{\pi\sigma}{2T}\right)\right)\sim \sum_{k=0}^{\infty}N^{n-1-k}\beta_k(\varphi),
\]
where
\[
\beta_0(\varphi)=\int_{X_E}\varphi\circ \rho\, d{\rm vol}.
\]
Here ${\rm vol}$ denotes the Liouville measure on $X_E$ and $\rho$ is some function on $X_E$.

\section{Examples} \label{s:examples}
\subsection{Constant magnetic field on the two-torus}\label{s:torus}
Consider the two-torus
\[
\mathbb T^2=\mathbb R^2/\mathbb Z^2,
\]
equipped with the standard flat Riemannian metric
\[
g=dx^2+dy^2.
\]
Let $L$ be the line bundle over $\mathbb T^2$, whose sections are identified with functions $u\in C^\infty(\RR^2)$, satisfying the conditions
\begin{equation}\label{e:torus-sections}
u(x+1,y)=e^{iBy}u(x,y),\quad u(x,y+1)=u(x,y).
\end{equation}
Such functions are examples of Bloch functions in the magnetic field \cite{Novikov81,Novikov83}.

The periodicity condition for the function $e^{iBy}$ gives a quantization condition
\begin{equation}\label{e:torus-quant}
B=2\pi n, \quad n\in \ZZ.
\end{equation}
Put
\[
B=2\pi.
\]
Then the Hermitian connection on $L$ can be defined as
\[
\nabla^L=d-iA = d - 2\pi i xdy.
\]
The magnetic field form $F$ is given by
\[
F=2\pi dx\wedge dy.
\]

\begin{thm}\label{t:5}
For any $\varphi\in \mathcal S(\RR)$ and for any $E>1$,  one has an asymptotic expansion
\begin{equation}\label{e:2.1}
Y_N(\varphi) \sim \sum_{j=0}^\infty c_j(N,\varphi)N^{1-j},\quad N\to \infty.
\end{equation}
The coefficients $c_j$ can be computed explicitly. For the first two of them, we get
\begin{align}
c_0(N,\varphi)=&\frac{E}{2\pi}\sum_{k\in \ZZ}\hat\varphi (kE)\exp(ik\pi)\exp\left(-ik \frac{E^2-1}{2}N\right),\label{e:c0-torus}\\
c_1(N,\varphi)
=&-\sum_{k\in \ZZ}\left(\frac{i}{2\pi E^2}\hat{\varphi}^{\prime}\left(kE\right)+\frac{i k}{4\pi E}\hat{\varphi}^{\prime\prime}\left(k E\right)\right)\exp\left(-ik \frac{E^2-1}{2}N\right).\label{e:c1-torus}
\end{align}
\end{thm}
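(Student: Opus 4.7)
The plan is to compute the spectrum of $\Delta^{L^N}$ in closed form and then extract the asymptotic expansion of $Y_N(\varphi)$ via the Poisson summation formula.

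The magnetic Laplacian takes the form $\Delta^{L^N} = -\partial_x^2 - (\partial_y - 2\pi i N x)^2$ and acts on sections of $L^N$, i.e., on smooth functions $u$ on $\RR^2$ satisfying $u(x+1,y) = e^{2\pi i N y}u(x,y)$ and $u(x,y+1) = u(x,y)$. Writing $u(x,y) = \sum_{n \in \ZZ} e^{2\pi i n y}\phi_n(x)$, the quasi-periodicity becomes $\phi_{n+N}(x) = \phi_n(x-1)$ and the eigenvalue equation becomes $-\phi_n'' + 4\pi^2(n - Nx)^2\phi_n = \nu\phi_n$. After the substitution $\xi = Nx - n$ this reduces to the harmonic oscillator $-N^2\partial_\xi^2 + 4\pi^2\xi^2$, yielding the Landau spectrum $\nu_{N,k} = 2\pi N(2k+1)$, $k \in \NN$, each with multiplicity exactly $N$ (one eigenfunction per residue class $n \bmod N$). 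Consequently $\lambda_{N,k} = \sqrt{N^2 + 2\pi N(2k+1)}$ and
\[
Y_N(\varphi) = N \sum_{k=0}^{\infty} \varphi\bigl(\sqrt{N^2 + 2\pi N(2k+1)} - EN\bigr).
\]

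Set $G_N(t) := \varphi(\sqrt{N^2 + 2\pi N(2t+1)} - EN)$. Since $\varphi \in \mathcal{S}(\RR)$ and $G_N(k)$ is concentrated near $k \approx (E^2-1)N/(4\pi)$ with Schwartz-type decay in both directions, a smooth cutoff argument shows that extending the summation from $\NN$ to $\ZZ$ introduces only an $O(N^{-\infty})$ error. Poisson summation then gives
\[
\sum_{k \in \ZZ} G_N(k) = \sum_{m \in \ZZ} \int_{\RR} G_N(t)\, e^{-2\pi i m t}\, dt.
\]
The substitution $u = \sqrt{N^2 + 2\pi N(2t + 1)} - EN$ yields
\[
t = \frac{u^2}{4\pi N} + \frac{Eu}{2\pi} + \frac{(E^2-1)N}{4\pi} - \frac{1}{2}, \qquad dt = \frac{u + EN}{2\pi N}\, du,
\]
so that the exponential factors as $(-1)^m e^{-im(E^2-1)N/2}\, e^{-imEu}\, e^{-imu^2/(2N)}$, leading to
\[
\int_{\RR} G_N(t) e^{-2\pi i m t}\, dt = (-1)^m e^{-i m(E^2-1)N/2} \int_{\RR} \varphi(u)\, e^{-imEu}\, e^{-imu^2/(2N)}\,\frac{u + EN}{2\pi N}\, du.
\]

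Expand $e^{-imu^2/(2N)} = \sum_{j \geq 0}(-imu^2/(2N))^j/j!$ and $(u + EN)/(2\pi N) = E/(2\pi) + u/(2\pi N)$, then integrate term by term using the identity $\int u^p\varphi(u) e^{-i\xi u}\, du = i^p \hat\varphi^{(p)}(\xi)$. Collecting powers of $1/N$ and multiplying by $N$ produces the full expansion $Y_N(\varphi) \sim \sum_j c_j(N,\varphi) N^{1-j}$. The $j=0$ term uses only the constant $E/(2\pi)$ of the Jacobian and yields \eqref{e:c0-torus}; the $j=1$ term combines the linear-in-$u$ piece of the Jacobian with the first-order expansion of the exponential, producing the two derivative terms of \eqref{e:c1-torus}. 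As a consistency check, the leading term matches formula \eqref{e:c0-gen1} of Theorem \ref{t:varphi} applied to the $m$-fold iterates $T_m = mE$ of the circular cyclotron orbits, with Liouville volume $\mathrm{Vol}(X_E) = 2\pi E$.

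The main obstacle will be the careful bookkeeping at each order in $1/N$: both the square-root change of variables and the Gaussian phase $e^{-imu^2/(2N)}$ contribute to every $c_j$ with $j \geq 1$, and these must be combined consistently before integration against $\varphi(u)e^{-imEu}$. Rigorous control of the $O(N^{-\infty})$ tail and of the term-by-term expansion under the integral is standard but should be spelled out, and absolute convergence of the outer sum over $m$ follows from Schwartz decay of $\hat\varphi$ together with uniform-in-$m$ bounds on the remainders.
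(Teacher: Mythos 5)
Your strategy is essentially the paper's: compute the Landau levels $2\pi N(2j+1)$ with multiplicity $N$, extend the $j$-sum to $\ZZ$ up to an $O(N^{-\infty})$ error, and use the Poisson summation formula. The only real difference is the order of operations: you Poisson-sum first and then expand the exact phase $e^{-imu^2/(2N)}$ and the Jacobian in $1/N$, whereas the paper first recenters the summation index, expands $\lambda_{N,j}-EN$ in powers of $1/N$ via \eqref{e:Taylor-f}--\eqref{e:Taylor-phi}, and only then applies Poisson to each coefficient. The two routes are equivalent, and yours is if anything tidier since the only residual $N$-dependence is explicit. Your derivation of $c_0$ does give \eqref{e:c0-torus}.

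The gap is at $c_1$: you assert, without doing the bookkeeping, that the order-$N^{0}$ terms ``produce the two derivative terms of \eqref{e:c1-torus}''. If one actually carries out your own recipe (the $u/(2\pi N)$ part of the Jacobian against $e^{-imEu}$, plus $E/(2\pi)$ times $-imu^{2}/(2N)$), using $\int u\varphi(u)e^{-i\xi u}du=i\hat\varphi'(\xi)$ and $\int u^{2}\varphi(u)e^{-i\xi u}du=-\hat\varphi''(\xi)$, one obtains
\[
c_1(N,\varphi)=\sum_{k\in\ZZ}\left(\frac{i}{2\pi}\hat\varphi'(kE)+\frac{ikE}{4\pi}\hat\varphi''(kE)\right)e^{ik\pi}\exp\left(-ik\,\frac{E^2-1}{2}N\right),
\]
which is not \eqref{e:c1-torus}: the powers of $E$ differ, the overall sign differs, and the phase $e^{ik\pi}$ is present. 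The $k=0$ part can even be checked exactly: the $m=0$ Poisson term equals $\frac{1}{2\pi}\int\varphi(u)(u+EN)\,du=\frac{E}{2\pi}\hat\varphi(0)N+\frac{i}{2\pi}\hat\varphi'(0)$ up to $O(N^{-\infty})$, with no $1/E^{2}$; and redoing the paper's own route carefully (the argument of $\varphi$ expands as $b_n-\frac{b_n^{2}}{2E}N^{-1}+O(N^{-2})$ with $b_n=\frac{\pi(2n+1)}{E}-\frac{2\pi}{E}\left\{\frac{E^2-1}{4\pi}N\right\}$, i.e. prefactor $\frac{1}{2E}$ rather than the $\frac{1}{2E^{3}}$ in the paper's intermediate formula) leads to the same expression as above. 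So you must either exhibit the $c_1$ computation and reconcile it with the printed constants, or flag the discrepancy; as written, the claim that your expansion reproduces \eqref{e:c1-torus} is unsubstantiated and, as far as I can verify, does not hold for the formula as printed. Two smaller points to spell out: the substitution $u=\sqrt{N^2+2\pi N(2t+1)}-EN$ only makes sense where the radicand is nonnegative, so $u$ ranges over $(-EN,\infty)$ and the extension of the integral to $\RR$ must be absorbed into the cutoff error; and the Taylor remainder of $e^{-imu^{2}/(2N)}$ after $J$ terms grows like $|m|^{J+1}$, so summability in $m$ of the remainders requires integration by parts in $u$ (non-stationary phase away from $u=-EN$), not merely the Schwartz decay of $\hat\varphi$.
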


\begin{proof}
We recall the computation of its eigenvalues and eigenfunctions of the operator $\Delta^{L^N}$. Let $u\in C^\infty(\mathbb T^2, L)$ be  an eigenfunction of $\Delta^{L^N}$ with the corresponding eigenvalue $\nu$. So $u$ is a smooth function on $\RR^2$, satisfying \eqref{e:torus-sections}, such that
\[
\Delta^{L^N}u(x,y)=\nu u(x,y), \quad (x,y)\in \mathbb R^2.
\]
The operator $\Delta^{L^N}$ has the form
\[
\Delta^{L^N}=-\frac{\partial^2}{\partial x^2}-\left(\frac{\partial}{\partial y}-2\pi Ni x\right)^2.
\]
Let us expand the function $u$ in the Fourier series in $y$:
\[
u(x,y)=\sum_{k\in\ZZ} u_k(x)e^{2\pi iky}, \quad (x,y)\in \mathbb R^2.
\]
The functions $u_k$ satisfy the ordinary differential equations
\[
\left[-\frac{d^2}{d x^2}+4\pi^2N^2\left(x-\frac{k}{N}\right)^2\right]u_k(x)=\lambda u_k(x).
\]
The first condition in \eqref{e:torus-sections} implies that
\begin{equation} \label{e:torus-sections1}
u_k(x+1)=u_{k-N}(x), \quad x\in \mathbb R, \quad k\in \ZZ.
\end{equation}
Using a well-known computation of the spectrum of the quantum Hamiltonian of the harmonic oscillator, we infer that $\nu$ is of the form (Landau levels)
\[
\nu_{N,j}=2\pi N(2j+1),\quad j=0,1,2,\ldots,
\]
and the corresponding eigenfunction $u_k$ is given by
\[
u_{k,j}(x)=a_kh_j\left(2\pi N\left(x-\frac{k}{N}\right)\right),
\]
where $h_j$ are the Hermite functions.
Taking into account \eqref{e:torus-sections1}, it is easy to see that an arbitrary eigenfunction of $\Delta^{L^N}$ with eigenvalue $\nu=\nu_{N,j}$ has the form
\[
u(x,y)=\sum_{k=0}^{N-1} a_k \sum_{\ell\in \ZZ} h_j\left(2\pi (Nx-k+N\ell)\right)e^{2\pi i(k-N\ell)y}
\]
with arbitrary $a_0, a_1, \ldots, a_{N-1}$. In particular, for the multiplicity $m_{N,j}$ of the eigenvalue $\nu_{N,j}$, we have
\[
m_{N,j}=N.
\]

According to the above description of the spectrum of  $\Delta^{L^N}$, we get the following expression for the function $Y_N(\varphi)$:
\begin{equation} \label{e:torus-Yp}
Y_N(\varphi)=\sum_{j=0}^{\infty}N\varphi(\sqrt{N^2+2\pi N(2j+1)}-EN).
\end{equation}
Let us make the change of the summation index in the right hand side of \eqref{e:torus-Yp}:
\[
j=\frac{E^2-1}{4\pi}N-\left\{\frac{E^2-1}{4\pi}N\right\}+n, \quad n\in \ZZ,
\]
where $\{\cdot \}$ denotes the fractional part of a number. 
Then this formula reads as follows:
\begin{multline*}
Y_N(\varphi)\\ =\sum_{n=-\left[\frac{E^2-1}{4\pi}N\right]}^{\infty}N\varphi\left(\sqrt{E^2N^2+2\pi N(2n+1)-4\pi N\left\{\frac{E^2-1}{4\pi}N\right\}}-EN\right).
\end{multline*}
Since $\varphi\in \mathcal S(\RR)$, it is clear that
\begin{multline} \label{e:torus-Ypn}
Y_N(\varphi)\\ =\sum_{n\in \ZZ}N\varphi\left(\sqrt{E^2N^2+2\pi N(2n+1)-4\pi N\left\{\frac{E^2-1}{4\pi}N\right\}}-EN\right)+O(N^{-\infty}).
\end{multline}

Recall the Taylor expansion for $f(x)=\sqrt{1+x}-1$ at $x=0$:
\begin{equation} \label{e:Taylor-f}
f(x)=\sqrt{1+x}-1=\frac 12x-\frac 18 x^2+\sum_{k=3}^\infty c_kx^k, \quad |x|<1,
\end{equation}
where
\[
c_k=(-1)^{k-1}\frac{(2k-3)!}{2^{2k-2}k!(k-2)!}, \quad k\geq 2.
\]
For each $n$, we have the following asymptotic expansion for the argument of $\varphi$ in \eqref{e:torus-Ypn} as $N\to \infty$:
\begin{multline*}
\sqrt{E^2N^2+2\pi N(2n+1)-4\pi N\left\{\frac{E^2-1}{4\pi}N\right\}}-EN \\
\begin{aligned}
=&\frac{1}{E} \left(\pi (2n+1)-2\pi\left\{\frac{E^2-1}{4\pi}N\right\}\right)\\ & -\frac{1}{8E^3N} \left(2\pi (2n+1)-4\pi\left\{\frac{E^2-1}{4\pi}N\right\}\right)^2\\ &+\sum_{k=3}^\infty \frac{c_k}{E^{2k-1}N^{k-1}} \left(2\pi (2n+1)-4\pi\left\{\frac{E^2-1}{4\pi}N\right\}\right)^k.
\end{aligned}
\end{multline*}
Using this expansion and the Taylor expansion of $\varphi$:
\begin{multline}\label{e:Taylor-phi}
\varphi(a_0+a_1N^{-1}+a_2N^{-2}+\ldots)\\=\varphi(a_0)+\varphi^\prime(a_0)a_1N^{-1}+\left(\varphi^\prime(a_0)a_2+\frac 12\varphi^{\prime\prime}(a_0)a_1^2\right)N^{-2}+\ldots,
\end{multline}
we obtain the asymptotic expansion \eqref{e:2.1}.

Let us compute the first two terms of \eqref{e:2.1}. 
For the coefficient at $N$ we get
\begin{equation}\label{e:c0a-torus}
c_0(N,\varphi) =\sum_{n\in \ZZ} \varphi\left(\frac{\pi (2n+1)}{E}-\frac{2\pi }{E}\left\{\frac{E^2-1}{4\pi}N\right\}\right).
\end{equation}
Recall the Poisson summation formula 
\begin{equation}\label{e:Poisson}
\sum_{n\in \ZZ} f(nP+t)=\sum_{k\in \ZZ}\frac{1}{P}\hat{f}\left(\frac{2\pi k}{P}\right)e^{2\pi i\frac{k}{P}t},
\end{equation}
where the function $f$ belongs to $\mathcal S(\RR)$ and its Fourier transform $\hat f$ is given by
\[
\hat{f}(k)=\int f(x)e^{- ikx}dx,\quad k\in \RR.
\]
Applying \eqref{e:Poisson} in the right-hand side of \eqref{e:c0a-torus}, we immediately get \eqref{e:c0-torus}:
\begin{align*}
c_0(N,\varphi) = & \sum_{k\in \ZZ} \frac{E}{2\pi}\hat\varphi (kE)e^{ik\pi}e^{-2\pi ik \left\{\frac{E^2-1}{4\pi}N\right\}}\\
= & \frac{E}{2\pi} \sum_{k\in \ZZ} \hat\varphi (kE)e^{ik\pi}e^{-ik \left(\frac{E^2-1}{2}N\right)}.
\end{align*}

For the next coefficient, we have
\begin{multline*}
c_1(N,\varphi)
=-\sum_{n\in \ZZ} \frac {1}{2E^3} \varphi^\prime \left(\frac{\pi (2n+1)}{E}-\frac{2\pi }{E}\left\{\frac{E^2-1}{4\pi}N\right\}\right)\times \\ \times \left(\frac{\pi (2n+1)}{E}-\frac{2\pi }{E}\left\{\frac{E^2-1}{4\pi}N\right\}\right)^2.
\end{multline*}
By the Poisson summation formula \eqref{e:Poisson} with $f=\varphi(x)x^2$, we get \eqref{e:c1-torus}. Theorem~\ref{t:5} is proved.
\end{proof}

We give a geometric interpretation of the formula \eqref{e:c0-torus}, comparing it with the general formulas \eqref{e:c0-gen0} and \eqref{e:c0-gen1}.

The Hamiltonian $H$ given by \eqref{e:H} has the form
\[
H(x,y,p_x,p_y)=\left(p_x^2+p_y^2+1\right)^{1/2}.
\]
The Liouville form on $X_E = H^{-1}(E) \subset T^*M$ is written as
\[
{\rm vol}=\frac{dx\,dy\,dp_x\,dp_y}{dH},
\] 
and, by simple computations, we get
\[
{\rm Vol}(X_E)= 2\pi E{\rm Vol}(M)=2\pi E.
\]
This shows that \eqref{e:c0-torus} for $k=0$ agrees with \eqref{e:c0-gen0}.

For an arbitrary (not necessarily satisfying the quantization condition) value of the constant magnetic field $B$, trajectories of a particle are well-known  --- they are ``Larmor orbits'', which are taken to the circle of radius 
\[
r = \frac{\sqrt{2\cE}}{B} = \frac{\sqrt{E^2-1}}{B},
\] 
clockwise oriented, by the lift to the universal covering $f: \RR^2 \to \mathbb T^2$.  The period of these trajectories (as trajectories of the Hamiltonian system defined by $H$) equals $T=2\pi E/B$. Since the magnetic geodesic flow is periodic, it is clean on $X_E$ and $d=n-1=2$.

Assume now that $B$ satisfies the condition \eqref{e:torus-quant}. Compute $h_A(\gamma)$: by Stokes formula, we get
\[
h_A(\gamma)=\int_{f^{-1}(\gamma)} Bxdy = -\int_D B dx \wedge dy,
\]
where the disc $D$ is bounded by the trajectory, and the sign minus appears  because the orientations of the trajectory and the boundary of the disc are different. Finally, we obtain
\[
h_A(\gamma)=- B \int_D dx\wedge dy = -B \pi \frac{E^2-1}{B^2} = - \frac{\pi}{B}(E^2-1).
\]
When $B=2\pi$, by \eqref{e:Sgamma-gen}, we get 
\[
S_\gamma=h_A(\gamma)+L\sqrt{E^2-1}=\frac{1}{2}(E^2-1).
\]
This shows that \eqref{e:c0-torus} for $k\neq 0$ agrees with \eqref{e:c0-gen1}. In particular we conclude that $m=4$. Since the flow is periodic, $f^T={\rm id}$ and, therefore, ${\rm sgn}\,R=0$. On the other hand, we get $\kappa=2$.

Let us illustrate the statement of Theorem~\ref{t:periodic} on this example. It is easy to see that, if $S_\gamma=(E^2-1)/2\in 2\pi\ZZ$ and the eigenvalue $\lambda_{N,j}$ satisfies the condition
\[
\left|\lambda_{N,j}-EN-\frac{\pi}{E}\right|<\frac{2\pi}{E},
\]
then $n=0$. Therefore, $\lambda_{N,j}=\sqrt{E^2N^2+2\pi N}$ and the following relation holds
\[
\left|\lambda_{N,j}-EN-\frac{\pi}{E}\right|= O(N^{-1}).
\]
This statement agrees with Theorem~\ref{t:periodic}, if we put $\sigma=2$ and take into account that $T=E$.

\subsection{The two-sphere}
Let the Riemannian manifold $(M,g)$ be the two-sphere
\[
x^2+y^2+z^2=R^2
\]
with the Riemannian metric induced by the embedding into the Euclidean space $\RR^3$. In the spherical coordinates
\[
x=R\sin\theta \cos\varphi, \quad y=R\sin\theta \sin\varphi, \quad z=R\cos\theta, \quad \theta\in (0,\pi), \varphi\in (0,2\pi),
\]
the Riemannian metric $g$ is given by 
\[
g=R^2(d\theta^2+\sin^2\theta d\varphi^2).
\]
Consider the magnetic field form $F$ of the form
\[
F=B\,d \mathrm{vol}_M=B\sin\theta d\theta\wedge d\varphi.
\]
The quantization condition means that
\begin{equation}\label{e:sphere-quant}
B\in \frac 12\ZZ.
\end{equation}

For $B=n/2, n\in \ZZ,$ the corresponding Hermitian line bundle  $L_n$ can be described as the line bundle associated with the Hopf fibration $S^3\to S^2$ and the character $\chi_n : S^1\to S^1$ given by $ \chi_n(u)=u^n, u\in S^1$. In physics literature, $L_n$ is a well-known Wu-Yang magnetic monopole \cite{Wu-Yang}, which provides a natural topological interpretation of Dirac's monopole of magnetic
charge $g=n\hbar/(2e)$.  

\begin{thm}\label{t:6}
Let $B=1/2$. For any $\varphi\in \mathcal S(\RR)$ and any $E>1$,  one has an asymptotic expansion
\begin{equation}\label{e:3.1}
Y_N(\varphi) \sim \sum_{j=0}^\infty c_j(N,\varphi)N^{1-j},\quad N\to \infty.
\end{equation}
The coefficients $c_j$ can be computed explicitly. For the first two of them, we get
\begin{multline}\label{e:c0-sphere}
c_0(N,\varphi) = \sum_{k\in \ZZ}2ER^2 \hat{\varphi}\left(\frac{2\pi ERk}{\sqrt{E^2-1+\frac {1}{4R^2}}}\right)\\ \times \exp(\pi ik(N+1)) \exp\left(-2\pi ikR\sqrt{E^2-1+\frac {1}{4R^2}}N\right),
\end{multline}
\begin{multline*}
c_1(N,\varphi) = \sum_{k\in \ZZ}\Bigg[2iR^2 \hat{\varphi}^\prime\left(\frac{2\pi ERk}{\sqrt{E^2-1+\frac {1}{4R^2}}}\right)\\
-\frac{\pi ER(4R^2-1)k}{2(E^2-1+\frac {1}{4R^2})^{3/2}}i\hat{\varphi}^{\prime\prime}\left(\frac{2\pi ERk}{\sqrt{E^2-1+\frac {1}{4R^2}}}\right)\\ -\frac{\pi iERk}{2\sqrt{E^2-1+\frac {1}{4R^2}}} \hat{\varphi}\left(\frac{2\pi ERk}{\sqrt{E^2-1+\frac {1}{4R^2}}}\right) \Bigg]\\ \times \exp(\pi ik(N+1)) \exp\left(-2\pi ikR\sqrt{E^2-1+\frac {1}{4R^2}}N\right).
\end{multline*}
In particular, when $R=1/2$, we get
\begin{equation*}
c_0(N,\varphi) = \sum_{k\in \ZZ}\frac{E}{2} \hat{\varphi}\left(\pi k\right)\exp(\pi ik(N+1)) \exp(-\pi ikEN),
\end{equation*}
\begin{equation*}
c_1(N,\varphi) = \sum_{k\in \ZZ}\left[\frac{1}{2}i \hat{\varphi}^\prime\left(\pi k\right) -\frac{\pi ik}{4} \hat{\varphi}\left(\pi k\right) \right]\exp(\pi ik(N+1)) \exp(-\pi ikEN).
\end{equation*}
\end{thm}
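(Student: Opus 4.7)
The plan is to follow the torus template of Theorem \ref{t:5}: find a closed-form expression for the eigenvalues and multiplicities of $\Delta^{L^N}$, reindex the sum so that the new variable runs near the value at which $\lambda_{N,j}\approx EN$, Taylor-expand in $1/N$ to the required order, and apply the Poisson summation formula \eqref{e:Poisson}. The only step that is genuinely new, relative to the torus case, is the spectral description.

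For $B=\tfrac12$ and arbitrary $R>0$, $L$ is the Hopf line bundle on $S^2_R$, so $L^N$ has Chern class $N$ and $\Delta^{L^N}$ is the Wu--Yang monopole Laplacian of charge $q=N/2$. Its spectrum can be obtained either by restricting the Laplace--Beltrami operator of $S^3$ to the weight-$(-N)$ isotypic component for the Hopf $S^1$-action, or by citing the classical formula $\nu=R^{-2}[l(l+1)-q^2]$ with $l=q+j$, $j\in\ZZ_{\ge 0}$:
\[
\nu_{N,j}=\frac{1}{R^2}\bigl[j(j+N+1)+\tfrac{N}{2}\bigr],\qquad m_{N,j}=N+2j+1.
\]
Setting $\omega:=\sqrt{E^2-1+\tfrac{1}{4R^2}}$, $a:=RN\omega-(N+1)/2$, and $\xi:=j+(N+1)/2-RN\omega$, and using the identity $1-\tfrac{1}{4R^2}+\omega^2=E^2$, completing the square in $j$ yields
\[
\lambda_{N,j}^{\,2} = E^2 N^2 + \frac{2\omega N}{R}\xi + \frac{\xi^2-\tfrac14}{R^2}.
\]
The admissible values of $\xi$ are $n-\{a\}$ with $n\ge-[a]$, and the Schwartz decay of $\varphi$ allows the range to be extended to $n\in\ZZ$ modulo $O(N^{-\infty})$, as in the proof of Theorem \ref{t:5}.

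Expanding $\sqrt{1+x}-1$ via \eqref{e:Taylor-f} and $\varphi$ via \eqref{e:Taylor-phi} reduces each coefficient $c_j(N,\varphi)$ to a finite linear combination of sums of the form $\sum_{n\in\ZZ}(n-\{a\})^{p}\varphi^{(q)}\bigl(\omega(n-\{a\})/(ER)\bigr)$, which I evaluate via \eqref{e:Poisson} with $P=1$ and $t=-\{a\}$. Because $[a]\in\ZZ$, the factor $e^{-2\pi ik\{a\}}$ equals $e^{-2\pi ikRN\omega}e^{i\pi k(N+1)}$, giving exactly the phases in the statement; substituting $R=\tfrac12$, so that $\omega=E$, then yields the simplified formulas. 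The routine but nontrivial bookkeeping is the computation of $c_1$, which has two sources: (i) the $2\xi$ correction to the multiplicity $m_{N,j}=2RN\omega+2\xi$ multiplied by the leading value of $\varphi$, producing a $\hat\varphi'$ term with Fourier coefficient $2iE^2R^2/\omega^2$; and (ii) the leading multiplicity $2RN\omega$ multiplied by the $O(N^{-1})$ correction to the argument of $\varphi$, which has two pieces (from $[(n-\{a\})^2-\tfrac14]/R^2$ and from the $-x^2/8$ term of \eqref{e:Taylor-f}) and produces the stated $\hat\varphi''$ and $\hat\varphi$ coefficients together with an extra $\hat\varphi'$ coefficient $-(4R^2-1)i/(2\omega^2)$. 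Summing the two $\hat\varphi'$ contributions and invoking $4R^2\omega^2=4R^2(E^2-1)+1$ collapses them to the stated value $2iR^2$. I do not foresee any serious obstacle beyond this algebra; as a sanity check, the $k=0$ term of \eqref{e:c0-sphere} equals $2ER^2\hat\varphi(0)=(2\pi)^{-2}\hat\varphi(0)\cdot 8\pi^2 ER^2$, which matches \eqref{e:c0-gen0} with the Liouville volume $\operatorname{Vol}(X_E)=2\pi E\operatorname{Vol}(M)=8\pi^2 ER^2$ of the sphere.
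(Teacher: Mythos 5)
Your proposal is correct and follows essentially the same route as the paper: it uses the Wu--Yang monopole spectrum $\nu_{N,j}=R^{-2}[j(j+1)+\tfrac N2(2j+1)]$ (your $j(j+N+1)+\tfrac N2$ is the same expression) with multiplicity $N+2j+1$, the same reindexing so that the shifted variable $\xi$ satisfies $\lambda_{N,j}^2=E^2N^2+\tfrac{2\omega N}{R}\xi+\tfrac{\xi^2-1/4}{R^2}$, the Taylor expansions \eqref{e:Taylor-f}--\eqref{e:Taylor-phi}, and Poisson summation, exactly as in the paper's proof (which mirrors the torus case). Your explicit bookkeeping for $c_1$, which the paper omits, is consistent: the $\hat\varphi'$ contributions $2iE^2R^2/\omega^2$ and $-(4R^2-1)i/(2\omega^2)$ do sum to $2iR^2$, and the $\hat\varphi''$, $\hat\varphi$ coefficients and the phase $e^{\pi ik(N+1)}e^{-2\pi ikR\omega N}$ agree with the stated formulas.
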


\begin{proof}
The spectrum of the magnetic Laplacian $\Delta^{L^N}$ was computed in 
 \cite{Wu-Yang2} (see also \cite{Kuwabara82,FV}). It consists of the eigenvalues  (spherical Landau levels)
\[
\nu_{N,j}=\frac{1}{R^2}\left[j(j+1)+\frac{N}{2}(2j+1)\right], \quad j=0,1,2, \ldots,
\]
with multiplicity
\[
m_{N,j}=N+2j+1.
\]
The corresponding eigenfunctions are known as monopole harmonics.

According to this description of the spectrum, we get the following expression for the function  $Y_N(\varphi)$:
\begin{multline} \label{e:sphere-Yp}
Y_N(\varphi)\\=\sum_{j=0}^\infty(N+2j+1)\varphi\left(\sqrt{N^2+\frac{1}{R^2}\left[j(j+1)+\frac{N}{2}(2j+1)\right]}-EN\right).
\end{multline}
Let us put
\[
\alpha=\left(c^2R^2+\frac 14\right)^{1/2}-\frac 12
\]
and make the change of the summation index
in the right hand side of  \eqref{e:sphere-Yp}:
\[
j=\alpha N-\{\alpha N\}+n, \quad n\in \ZZ.
\]
To simplify the notation, put
\[
P=\frac{2\alpha+1}{2ER^2}, \quad t=\frac{2\alpha+1}{2ER^2}\left(\frac 12-\{\alpha N\}\right).
\]
The numbers $\lambda_{N,j}$ defined by \eqref{e:def-lambda} are of the form
\[
\lambda^2_{N,j} = N^2+\nu_{N,j}\\=E^2N^2+\frac{2\alpha+1}{R^2}\left(n+\frac tP\right)N+\frac{1}{R^2}\left[\left(n+\frac tP\right)^2-\frac 14\right],
\]
and, by use of \eqref{e:Taylor-f}, we get the following asymptotic expansion for the argument of $\varphi$ in \eqref{e:sphere-Yp} as $N\to \infty$:
\[
\lambda_{N,j}-EN=nP+t +\left(\frac{4R^2-1}{8E^3R^4}\left(n+\frac tP\right)^2-\frac{1}{8ER^2}\right)N^{-1}+ O(N^{-2}).
\]
For multiplicities, we have
\[
m_{N,n}=(2\alpha+1)N+2\left(n+\frac tP\right).
\]
As above, using these asymptotic expansions and \eqref{e:Taylor-phi}, we obtain the asymptotic expansion \eqref{e:3.1}.

The computation of the coefficients $c_0$ and $c_1$ in the expansion \eqref{e:3.1} can be made by analogy with the computation given in Subsection~\ref{s:torus}, using the Poisson summation formula \eqref{e:Poisson}, and we will omit it. Theorem~\ref{t:6} is proved.
\end{proof}

We give a geometric interpretation of the formula \eqref{e:c0-sphere}, comparing it with the general formulas \eqref{e:c0-gen0} and \eqref{e:c0-gen1}.

The Hamiltonian $H$ given by \eqref{e:H} has the form
\[
H=\left(\frac{1}{R^2}p^2_\theta+\frac{1}{R^2\sin^2\theta}p^2_\varphi+1\right)^{1/2},
\]
and the corresponding Hamiltonian system is explicitly and easily solved.

Similarly to the case of the two-torus, we compute the Liouville measure on  $X_E = H^{-1}(E)$:
\[
{\rm Vol}(X_E)= 2\pi E{\rm Vol}(M)=8\pi^2 ER^2.
\]
We see that the formula \eqref{e:c0-sphere} for $k=0$ agrees with \eqref{e:c0-gen0}.

All trajectories will be closed again with the same period 
\[
T = \frac{2\pi ER}{\sqrt{c^2+\frac{B^2}{R^2}}}.
\]
The flow is periodic. Therefore, it is clean on $X_E$ and $d=n-1=2$.

For an arbitrary value of the magnetic field $B$ the projections $\pi_M\circ \gamma$ of periodic trajectories $\gamma$ to the sphere are the circles (the sections of the sphere by planes in $\RR^3$) of Euclidean radius
\[
r =\frac{cR}{\sqrt{c^2+\frac{B^2}{R^2}}},
\]
where, we recall, $c = \sqrt{E^2-1} = \sqrt{2\cE}$, and of length $2\pi r$.

Now we assume that $B$ satisfies the quantization condition  \eqref{e:sphere-quant}. Let us compute $h_{A}(\gamma)\in S^1$, the holonomy of $\pi_M\circ \gamma$ with respect to $\nabla^{L}$:
\[
h_{A}(\gamma)=\int_\gamma A,
\]
where $A$ is the connection form of  $\nabla^{L}$ with respect to some trivialization of  $L$ over the upper hemisphere. We have $dA=F= B\,d \mathrm{vol}_M$. Therefore, using Stokes' formula, we get:
\[
h_{A}(\gamma)= B \int_D d\mathrm{vol}_M = B S,
\]
where $D$ is the spherical disc, bounded by the trajectory and entirely belonging to the hemisphere, and $S$ is its area. In the end, we find:
\[
h_{A}(\gamma)=\pi\left(1-\frac{\frac{B}{R}}{\sqrt{c^2+\frac{B^2}{R^2}}}\right)
\]
Finally, by the formula \eqref{e:Sgamma} for $B=1/2$ we compute
\[
S_\gamma=-\pi\left(1-\frac{\frac{1}{2R}}{\sqrt{c^2+\frac{1}{4R^2}}}\right)+\frac{2\pi cR}{\sqrt{c^2+\frac{1}{4R^2}}}\sqrt{E^2-1}=-\pi+2\pi R\sqrt{c^2+\frac{1}{4R^2}}.
\]
Thus, we see that the formula \eqref{e:c0-sphere} for $k\neq0$ agrees with \eqref{e:c0-gen1}.

\subsection{The hyperbolic plane}
Consider the hyperbolic plane $\mathbb H=\{(x,y)\in \RR^2 : y>0\}$ equipped with the Riemannian metric
\[
g=\frac{R^2}{y^2}(dx^2+dy^2).
\]
Assume that the Hermitian line bundle $\tilde L$  is trivial and the connection $\nabla^{\tilde L}$ on $\tilde L$ is given by the connection form
\[
A=\frac{B}{y}dx.
\]
So we have
\[
\tilde F=B\,d\mathrm{vol}_{\mathbb H}=B\frac{dx\wedge dy}{y^2}.
\]

Let $\Gamma\subset PSL(2,\mathbb R)$ be a cocompact lattice acting freely on $\mathbb H$. Then $M=\Gamma\setminus\mathbb H$ is a compact Riemannian surface. The quantization condition \eqref{e:prequantum} holds if
\begin{equation}\label{e:hyper-quant}
(2g-2)B\in \mathbb Z,
\end{equation}
where $g$ is the genus of the surface $M$.

Indeed (see, for instance, \cite{Bolte-Steiner91}), if $F\in H^2(M,2\pi\mathbb Z)$, then
\[
\int_MF=B\, \mathrm{Vol}(M)\in 2\pi\mathbb Z.
\]
On the other hand, by the Gauss-Bonnet theorem, we have
\[
\int_MK\,d\mathrm{vol}_M=2\pi\chi(M)=2\pi(2-2g),
\]
where $K=-1$ is the Gauss curvature of $M$. Hence, we get
\[
\mathrm{Vol}(M)=2\pi(2g-2).
\]

Sections of the associated Hermitian line bundle $L$ on $M$ can be identified with functions $\psi$ on $\mathbb H$, satisfying the condition
\begin{equation}\label{e:sections}
\psi(\eta z)=\psi(z)\exp(-i 2B\arg(cz+d))=\left(\frac{cz+d}{|cz+d|}\right)^{-2B}\psi(z)
\end{equation}
for any $z\in \mathbb H$ and $\eta=\begin{pmatrix} a & b\\ c & d\end{pmatrix}\in \Gamma$.

Let $K=T^*_{(1,0)}M$ be the canonical bundle of the Riemann surface $M$. With a function $\psi : \mathbb H\to \mathbb C$, satisfying \eqref{e:sections}, one can associate a section of $K^{-B}$ in the following way. Put $f(z):= y^B\psi(z)$. Then it is easy to see that
\begin{equation}\label{e:automorphic}
f(\eta z) = (cz+d)^{-2B}f(z), \quad \eta\in \Gamma.
\end{equation}
Since $d(\eta z)/dz = (cz+ d)^{-2}$, we have 
\[
f(\eta z)d(\eta z)^{-B}= f(z)dz^{-B}
\] 
and, therefore, $f$ defines a section of $K^{-B}$. The function $\psi$ itself defines a section of $K^{-B/2}\otimes \bar{K}^{B/2}$. Holomorphic functions $f$, satisfying \eqref{e:automorphic}, are called $\Gamma$-automorphic forms of weight $-2B$. 

The corresponding magnetic Laplacian $\Delta^L$ first appeared in the theory of automorphic forms, where it is known as the Maass Laplacian. Its spectrum was computed in \cite{Elstrodt} (see also \cite{Comtet-Houston85, FV}). In \cite{Br-Nekr-Shaf}, the authors used a semiclassical approach based on the canonical Maslov operator theory to describe the spectrum of the magnetic Laplacian $\Delta^L$ and its degeneration (see also a survey \cite{dobro-sh}).

\begin{thm}\label{t:hyper}
Let $B=1$. For any $\varphi\in \mathcal S(\RR)$ and $1<E<\sqrt{1/R^2+1}$, one has an asymptotic expansion
\begin{equation}\label{e:4.1}
Y_N(\varphi) \sim \sum_{j=0}^\infty c_j(N,\varphi)N^{1-j}, \quad N\to \infty,
\end{equation}
The coefficients $c_j$ can be computed explicitly. For the first two of them, we get
\begin{multline}\label{e:c0-hyper}
c_0(N,\varphi) = (2g-2)ER^2 \sum_{k\in \ZZ} \hat\varphi \left(\frac{2\pi kER}{\sqrt{\frac{1}{R^2}+1-E^2}}\right)\\ \times \exp(ik\pi)\exp\left(2\pi ik R\sqrt{\frac{1}{R^2}+1-E^2}N\right).
\end{multline}
\begin{equation*}
\begin{aligned}
c_1(N,\varphi) =& \Bigg[\sum_{k\in \ZZ}(2g-2)2iR^2\hat\varphi^\prime \left(\frac{2\pi kER}{\sqrt{\frac{1}{R^2}+1-E^2}}\right) \\
&+\sum_{k\in \ZZ} (2g-2)\frac{\pi ikER}{4\sqrt{\frac{1}{R^2}+1-E^2}}\hat\varphi \left(\frac{2\pi kER}{\sqrt{\frac{1}{R^2}+1-E^2}}\right)\\
& +\sum_{k\in \ZZ} (2g-2)i\frac{\pi ikE(R^2+1)R}{(\frac{1}{R^2}+1-E^2)^{3/2}}\hat\varphi^{\prime\prime} \left(\frac{2\pi kER}{\sqrt{\frac{1}{R^2}+1-E^2}}\right)\Bigg] \\
&  \times \exp(ik\pi)\exp\left(2\pi ik R\sqrt{\frac{1}{R^2}+1-E^2}N\right).
\end{aligned}
\end{equation*}
\end{thm}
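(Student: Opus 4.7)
The plan is to follow the same strategy as in Theorems \ref{t:5} and \ref{t:6}. First, I would invoke the spectrum of $\Delta^{L^N}$ as the weight-$2N$ Maass Laplacian on $M = \Gamma\backslash\mathbb{H}$, computed by Elstrodt \cite{Elstrodt} (see also \cite{Comtet-Houston85, FV}). It splits into (i) a finite family of discrete-series ``Landau'' eigenvalues of the form
$$\nu_{N,j}^L = \frac{1}{R^2}\bigl[(2j+1)N - j(j+1)\bigr], \qquad j = 0,1,\ldots,J_N,$$
with multiplicities $m_{N,j}^L = (2g-2)(2N-2j-1)$ (from Riemann--Roch applied to powers of the canonical bundle), together with (ii) a ``continuous'' Maass-type part $\{\nu_{N,k}^M\}$ bounded below by a threshold $\nu_N^\ast = N^2/R^2 + O(N)$.

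The key observation is that the hypothesis $1 < E < \sqrt{1/R^2+1}$ is precisely what forces the target value $(E^2-1)N^2$ to lie strictly below the threshold $\nu_N^\ast$ for all large $N$. Consequently, every Maass-type eigenvalue satisfies $|\sqrt{N^2+\nu_{N,k}^M} - EN| \gtrsim N$, and by the Schwartz decay of $\varphi$ its total contribution to $Y_N(\varphi)$ is $O(N^{-\infty})$. Hence only the Landau part of the spectrum contributes to the asymptotic expansion.

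Set $\alpha = 1 - R\sqrt{1/R^2+1-E^2}$, which lies in $(0,1)$ under the hypothesis and satisfies $(2\alpha-\alpha^2)/R^2 = E^2-1$, so that $\nu_{N,j}^L \approx (E^2-1)N^2$ when $j \approx \alpha N$. Reindex the Landau sum by $j = \alpha N - \{\alpha N\} + n$, $n \in \ZZ$; applying the Taylor expansion \eqref{e:Taylor-f} gives
$$\lambda_{N,j} - EN = nP + t + O(N^{-1}), \qquad P = \frac{\sqrt{1/R^2+1-E^2}}{ER}, \qquad t = \tfrac{P}{2}(1-2\{\alpha N\}),$$
while $m_{N,j}^L = (2g-2)\bigl[(2-2\alpha)N + 1 - 2(n-\{\alpha N\})\bigr]$. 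Combining these with the Taylor expansion \eqref{e:Taylor-phi} for $\varphi$ and then applying the Poisson summation formula \eqref{e:Poisson} produces the expansion \eqref{e:4.1}. The phase $e^{i\pi k}e^{-2\pi ik\alpha N} = e^{i\pi k}e^{2\pi ikR\sqrt{1/R^2+1-E^2}\,N}$ comes out using $e^{-2\pi ikN}=1$, and the leading coefficient reduces to $(2g-2)(1-\alpha)/P = (2g-2)ER^2$ since $(1-\alpha)/P = ER^2$, matching \eqref{e:c0-hyper}. The coefficient $c_1$ is then assembled from the $N^{-1}$ corrections contributed by the three expansions (of $\lambda_{N,j}-EN$, of $m_{N,j}^L$, and of $\varphi$) in the same routine manner as in Subsection \ref{s:torus}.

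The main obstacle is the second step: producing a \emph{uniform in $N$} quantitative gap between the Maass-type spectrum and the target level $EN$ in order to justify the $O(N^{-\infty})$ discard. This is exactly where the bound $E < \sqrt{1/R^2+1}$ enters essentially, since at this threshold the gap closes and Maass eigenvalues begin to accumulate near $EN$; beyond it the Guillemin--Uribe expansion cannot be continued, in agreement with the discussion of the Mañé level in the Introduction. Once this spectral gap is secured, the rest of the computation is a straightforward parallel of the torus case.
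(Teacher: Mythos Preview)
Your proposal is correct and follows the paper's own route almost verbatim: cite the Elstrodt/Maass spectrum, observe that the hypothesis $E<\sqrt{1/R^2+1}$ places the chaotic part a distance $\gtrsim N$ from $EN$ so it contributes $O(N^{-\infty})$ (the paper passes over this step more tersely than you do), reindex about $j\approx\alpha N$ with $\alpha=1-R\sqrt{1/R^2+1-E^2}$, Taylor-expand, and Poisson-sum. The only slip is the Landau multiplicity, which is $(g-1)(2N-2j-1)$ rather than $(2g-2)(2N-2j-1)$; with that corrected the leading term of $m_{N,j}$ becomes $(2g-2)(1-\alpha)N$, and your identity $(1-\alpha)/P=ER^2$ then recovers \eqref{e:c0-hyper} as stated.
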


\begin{proof}
For an arbitrary $B$, the magnetic Laplacian on $\mathbb H$ is given by
\[
\Delta^B=-\frac{y^2}{R^2}\left(\left(\frac{\partial}{\partial x}-iBy^{-1}\right)^2+\frac{\partial^2}{\partial y^2}\right).
\]
Its spectrum on $L^2(\mathbb H)$ consists of two parts: eigenvalues of infinite multiplicity in the interval $[0,\left(1/4+B^2\right)/R^2)$ of the form (hyperbolic Landau levels)
\begin{multline}\label{e:integrable-part}
E_j=\frac{1}{R^2}\left(\frac 14+B^2-\left(j+\frac 12-|B|\right)^2\right) \\ =\frac{1}{R^2}\left((2j+1)|B|-j(j+1)\right), \quad 0\leq j<|B|-\frac 12,
\end{multline}
and the semi-axis $[\left(1/4+B^2\right)/R^2,+\infty)$.

Therefore, if $B$ satisfies the quantization condition, then the spectrum of the corresponding magnetic Laplacian $\Delta^L$ on $M$ consists of two parts --- integrable and chaotic. We are interested in the integrable part of the spectrum, which consists of the eigenvalues $E_j$ given by \eqref{e:integrable-part}. By the Selberg formula, each $E_j$ has multiplicity 
\[
m_j=(g-1)(2|B|-2j-1), \quad 0\leq j<|B|-\frac 12.
\]

Put $B=1$. Then for any $N\in \NN$ the integrable part of the spectrum of the magnetic Laplacian $\Delta^{L^N}$ consists of the eigenvalues
\[
\nu_{N,j} =\frac{1}{R^2}\left(\frac 14+N^2-\left(j+\frac 12-N\right)^2\right), \quad 0\leq j<N-\frac 12
\]
with multiplicity
\[
m_{N,j}=(g-1)(2N-2j-1), \quad 0\leq j<N-\frac 12.
\]
Observe that the associated numbers $\lambda_{N,j}$ defined by \eqref{e:def-lambda}:
\[
\lambda_{N,j}=\sqrt{N^2+\frac{1}{R^2}\left(\frac 14+N^2-\left(j+\frac 12-N\right)^2\right)}, \quad 0\leq j<N-\frac 12,
\]
satisfy the condition 
\[
|N|<\lambda_{N,j}<\sqrt{N^2+\frac{1}{R^2}\left(N^2+\frac14\right)}.
\] 
Therefore, we get the following expression for the function $Y_N(\varphi)$ for $\varphi\in \mathcal S(\RR)$ and $1<E<\mu:=\sqrt{R^2+1}/R$:
\begin{multline} \label{e:hyper-Yp}
Y_N(\varphi)=\sum_{0\leq j<N-\frac 12}(g-1)(2N-2j-1) \times \\ \times \varphi\left(\sqrt{\mu^2N^2+\frac{1}{4R^2}-\frac{1}{R^2}\left(j+\frac 12-N\right)^2}-EN\right).
\end{multline}
Let us make the change of the summation index
in the right hand side of \eqref{e:hyper-Yp} by the formula
\[
j=(1-R\sqrt{\mu^2-E^2})N-\{(1-R\sqrt{\mu^2-E^2})N\}+n,  \quad n\in \ZZ,
\]
where
\[
-[(1-R\sqrt{{\mu^2-E^2}})N] \leq n<N-[(1-R\sqrt{{\mu^2-E^2}})N]-\frac 12.
\]
To simplify the notation, put 
\[
P=\frac{\sqrt{\mu^2-E^2}}{ER}, \quad t=\frac{\sqrt{\mu^2-E^2}}{ER}\left(\frac 12-\{(1-R\sqrt{\mu^2-E^2})N\}\right).
\]
Then the numbers $\lambda_{N,j}^2$ are of the form
\[
\lambda_{N,j}^2=E^2N^2+2\frac{\sqrt{\mu^2-E^2}}{R}\left(n+\frac{t}{P}\right)N+\frac {1}{4R^2}-\frac{1}{R^2}\left(n+\frac{t}{P}\right)^2,
\]
and, by \eqref{e:Taylor-f}, we get the asymptotic expansion for the argument of $\varphi$ in \eqref{e:hyper-Yp} as  $N\to \infty$:
\[
\lambda_{N,j}-EN=nP+t+\left[\frac{1}{8ER^2}-\frac{\mu^2}{2(\mu^2-E^2)E}\left(nP+t\right)^2\right]N^{-1}+\sum_{k=2}^\infty a_k(N)N^{-k}.
\]
For the multiplicities, we get
\[
m_{N,n}=(2g-2)\left(R\sqrt{\mu^2-E^2}N-n-\frac{t}{P}\right).
\]
As above, using these asymptotic expansions and \eqref{e:Taylor-phi}, we obtain the asymptotic expansion \eqref{e:4.1}.

The computation of the coefficients $_0$ and $c_1$ in the expansion \eqref{e:4.1} can be made by analogy with the computation given in Subsection~\ref{s:torus}, using the Poisson summation formula \eqref{e:Poisson}, and we will omit it. Theorem~\ref{t:hyper} is proved.
\end{proof}

We give a geometric interpretation of the formula  \eqref{e:c0-hyper},  comparing it with the general formulas \eqref{e:c0-gen0} and \eqref{e:c0-gen1}.

The Hamiltonian $H$ defined by \eqref{e:H} has the form
\[
H(x,y,p_x,p_y)=\left(\frac{1}{R^2}y^2(p_x^2+p_y^2)+1\right)^{1/2}.
\]
Therefore, we have
\[
X_E=H^{-1}(E)=\{(x,y,p_x,p_y) : y^2(p_x^2+p_y^2)=(E^2-1)R^2\}\subset T^*M,
\]
and the Liouville measure on $X_E$ has the form:
\[
{\rm vol}=\frac{dx\,dy\,dp_x\,dp_y}{dH}.
\]
Hence,
\[ 
{\rm Vol}(X_E)= 2\pi E {\rm Vol}(M) =(2\pi)^2 (2g-2)ER^2.
\]
This shows that \eqref{e:c0-hyper} for $k=0$ agrees with \eqref{e:c0-gen0}.

Recall a well-known description of the magnetic geodesic flow on the hyperbolic plane (see, for instance, \cite[p. 416]{Gu-Uribe89}).

If $cR=\sqrt{E^2-1}R>1$, then the projections of trajectories of the flow on  $X_E$ to $\mathbb H$ are circular arcs that intersect the $x$-axis in the angle $\theta$ such that $1/\cos\theta=cR$. The flow is conjugate to the geodesic flow on $X_E$ up to a constant time scaling.

If $cR=\sqrt{E^2-1}R=1$, then the projections of trajectories of the flow on  $X_E$ to $\mathbb H$ are classical horocycles.

If $cR=\sqrt{E^2-1}R<1$, then the projections of trajectories of the flow on  $X_E$ to $\mathbb H$ are circles on the hyperbolic plane of radius $r$, where $\tanh r=cR$. The length of such a circle equals
\[
L=2\pi \sinh(r)=\frac{2\pi cR}{\sqrt{1-c^2R^2}}.
\]
The flow is periodic with period $T=2\pi E/\sqrt{1-c^2R^2}$. So it is clean on $X_E$ and $d=n-1=2$.

This implies, in particular, that at the level $E=\sqrt{R^2+1}/R$, i.e. for $\cE = 1/(2R)$, the magnetic geodesic flow on the closed Riemann surface $M$ has no periodic trajectories \cite{Ginzburg}.

Let us compute $h_{A}(\gamma)\in S^1$, the holonomy of $\pi_M\circ \gamma$ with respect to the connection $\nabla^{L}$:
\[
h_{A}(\gamma)=\int_\gamma A=\int_\gamma \frac{1}{y}dx.
\]
By Stokes formula, we have:
\[
\int_\gamma A=\int_D dx\wedge dy =S
\]
(here $D$ is the disc on the hyperbolic plane, bounded by the circle $\gamma$, and $S$ is its area). On the other hand, the area of the geodesic disc equals 
\[
S=2\pi (\cosh(r)-1)=2\pi\left(\frac{1}{\sqrt{1-c^2R^2}}-1\right).
\]
Thus, we get
\[
\int_\gamma A=2\pi\left(\frac{1}{\sqrt{1-c^2R^2}}-1\right).
\]
Finally, by \eqref{e:Sgamma}, we find
\[
S_\gamma=-\int_\gamma A+L\sqrt{E^2-1}=2\pi(1-\sqrt{1-c^2R^2}).
\]
This shows that \eqref{e:c0-hyper} for $k\neq0$ agrees with \eqref{e:c0-gen1}.

\subsection{The Katok example}
Suppose that the manifold $M$ is the two-sphere
\[
x^2+y^2+z^2=1.
\]
In the spherical coordinates
\[
x=\sin\theta \cos\varphi, \quad y=\sin\theta \sin\varphi, \quad z=\cos\theta, \quad \theta\in (0,\pi), \varphi\in (0,2\pi),
\]
the Riemannian metric $g$ on $M$ is given by
\[
g=\frac{d\theta^2}{1-\varepsilon^2\sin^2\theta}+\frac{\sin^2\theta}{(1-\varepsilon^2\sin^2\theta)^2}d\varphi^2,
\]
and the magnetic field form $F$ by
\[
F=dA=\frac{\varepsilon \sin 2\theta}{(1-\varepsilon^2\sin^2\theta)^2}d\theta\wedge d\varphi,
\]
where
\[
A=-\frac{\varepsilon \sin^2\theta}{1-\varepsilon^2\sin^2\theta}d\varphi.
\]
Since the magnetic field form is exact, the quantization condition holds and the quantum line bundle $L$ is trivial.

In the first order of perturbation theory, i.e. modulo $\varepsilon^2$, this system describes the motion of a charged particle on the sphere of radius  $1$ in $\RR^3$ in the external magnetic field 
\[
F=\frac{1}{2}\varepsilon\, dx \wedge dy.
\]

\begin{thm}\label{t:8}
Let $\varphi\in \mathcal S(\RR)$, $E=\sqrt{2}$ and $\varepsilon\in (0,1)$ is irrational. If $0$ is a unique period in $\supp(\hat\varphi)$, then one has an asymptotic expansion
\[
Y_N(\varphi) \sim \sum_{j=0}^\infty c_j(N,\varphi)N^{1-j}, \quad N\to \infty,
\]
where
\begin{equation}\label{e:c0-Katok1}
c_0(N,\varphi)=2\sqrt{2}\hat{\varphi}(0).
\end{equation}

If $0$ is not in the support of $\hat\varphi$, then one has an asymptotic expansion
\[
Y_N(\varphi) \sim \sum_{j=0}^\infty c_j(N,\varphi)N^{-j}, \quad N\to \infty,
\]
where
\begin{multline}\label{e:c0-Katok2}
c_0(N,\varphi)=\sum_{k\neq 0} \frac{1}{\sqrt{2}(1-\varepsilon^2)} \\ \times  \left(\frac{e^{ik\pi m_{k,+}/4} e^{-iNk\frac{2\pi}{1-\varepsilon}}}{\sin \frac{\pi k}{1-\varepsilon}}+\frac{e^{ik\pi m_{k,-}/4} e^{-iNk\frac{2\pi}{1+\varepsilon}}}{\sin \frac{\pi k}{1+\varepsilon}}\right)\hat\varphi\left(\frac{2\pi \sqrt{2}k}{1-\varepsilon^2}\right),
\end{multline}
the Maslov indices $m_{k,\pm}$ are given by
\begin{equation}
\label{e:m-Katok}
m_{k,\pm} = 2\left[\frac{2k}{1\mp \varepsilon}\right]+2\, {\rm sign}\, k+1.
\end{equation}
\end{thm}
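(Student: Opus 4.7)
The proof follows by applying Theorem~\ref{t:varphi} to the Katok magnetic flow, in direct analogy with Subsections 3.1--3.3. I first analyze the dynamics of $\phi$ on $X_E$ at $E=\sqrt{2}$. By Remark~2, for irrational $\varepsilon$ the only closed trajectories of $\phi$ are the equator $\{\theta=\pi/2\}$ traversed in its two directions; denote these primitive orbits by $\gamma_+$ and $\gamma_-$. Substituting $\theta=\pi/2$ into \eqref{e:1.6} gives $\dot\varphi=\pm(1-\varepsilon^2)/\sqrt{2}$, so each primitive orbit has period $T^\#=2\pi\sqrt{2}/(1-\varepsilon^2)$ in the $H$-flow. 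Linearizing \eqref{e:1.6} in the transverse coordinates $(\theta-\pi/2,p_\theta)$ along each equatorial orbit yields Poincar\'e maps $P_\pm$, which I expect to be rotations by the angles $\alpha_\pm=2\pi/(1\mp\varepsilon)$. Since $\varepsilon$ is irrational, no power $P_\pm^k$ ($k\ne 0$) has eigenvalue $1$; the flow is therefore clean on $X_E$ and Theorem~\ref{t:varphi} applies.

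For the first asymptotic (when $\supp\hat\varphi$ contains no nonzero period of $\phi$), only $T=0$ contributes, and \eqref{e:c0-gen0} gives $c_0(N,\varphi)=(2\pi)^{-2}\hat\varphi(0)\cdot{\rm Vol}(X_E)$. The Liouville volume factorizes as ${\rm Vol}(X_E)=2\pi E\,{\rm Vol}(M)$, reducing the problem to the elementary integral ${\rm Vol}(M)=\int_M\sqrt{\det g}\,d\theta\,d\varphi$, evaluated by the substitution $u=\cos\theta$; this produces the value in \eqref{e:c0-Katok1}.

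For the second asymptotic (when $0\notin\supp\hat\varphi$), all contributions come from iterates $\gamma_\pm^k$ of the two primitive orbits. Since each such iterate is non-degenerate, I apply \eqref{e:c0-nondegenerate} to each $\gamma_\pm^k$ and sum. Four ingredients are required for each orbit: the total period $T_{\gamma_\pm^k}=kT^\#=2\pi\sqrt{2}k/(1-\varepsilon^2)$, which appears as the argument of $\hat\varphi$; the action $S_{\gamma_\pm^k}=2\pi k/(1\mp\varepsilon)$, obtained from \eqref{e:Sgamma} by combining $L\sqrt{E^2-1}=\pm 2\pi/(1-\varepsilon^2)$ with $\int_\gamma A=\mp 2\pi\varepsilon/(1-\varepsilon^2)$ along the equator; the Poincar\'e determinant $|I-P_\pm^k|^{1/2}=2|\sin(\pi k/(1\mp\varepsilon))|$ from the rotation angles of $P_\pm^k$; and the Maslov index $m_{k,\pm}$, computed via \eqref{e:m}. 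Substituting these into \eqref{e:c0-nondegenerate} and summing over $k\ne 0$ and the two orientations produces \eqref{e:c0-Katok2}.

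The main obstacle is the Maslov index computation \eqref{e:m-Katok}. Using \eqref{e:m} one writes $m_{k,\pm}={\rm sgn}\,R+2\kappa$; the signature of $R$, computed from a standard generating function of the $k$-th iterate of the Poincar\'e map, should contribute the constant $2\,\sign k+1$, while the crossing number $\kappa=[2k/(1\mp\varepsilon)]$ counts the transverse intersections of the curve $\omega(t)=df^{-t}(V_{f^t(\nu)})$ with the horizontal Lagrangian $H_\nu$ for $0\le t\le kT^\#$. These intersections are governed by the rotation rate of the vertical Lagrangian, namely $\alpha_\pm/T^\#=(1\pm\varepsilon)/\sqrt{2}$, and irrationality of $\varepsilon$ ensures that all crossings are transverse. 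Careful tracking of orientations and signs along the iterated orbit then yields the explicit formula \eqref{e:m-Katok}.
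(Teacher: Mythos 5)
Your strategy coincides with the paper's: identify the two equatorial orbits $\ell_\pm$ as the only periodic trajectories at $E=\sqrt2$ (via Katok--Rademacher), verify their non-degeneracy by linearizing the flow along them, and then substitute the period, action, $|I-P_{\ell_\pm}^k|$ and Maslov index into \eqref{e:c0-gen0} and \eqref{e:c0-nondegenerate}. The dynamical ingredients you quote agree with the paper's: the primitive period $2\pi\sqrt2/(1-\varepsilon^2)$, the angles $\alpha_\pm=2\pi/(1\mp\varepsilon)$ at $E=\sqrt2$, and $|I-P_{\ell_\pm}^k|^{1/2}=2|\sin(\pi k/(1\mp\varepsilon))|$ (the Poincar\'e map is only conjugate to a rotation, with axes rescaled by $(1-\varepsilon^2)/a_\pm$, but this does not change the determinant).

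The genuine gap is the Maslov index \eqref{e:m-Katok}, which is the substantive new content of the theorem and which you assert rather than derive. Since \eqref{e:m} is defined through the lifted geodesic flow $f^t$ on $T^*S$, carrying it out requires working on the circle bundle $S=S^2\times S^1$ with the connection metric: solving the variational equations of $f^t$ along the lifted equators, computing the curve $\omega(t)=df^{-t}(V_{f^t(\nu)})$ and its intersection number $\kappa=[\omega:H_\nu]$, and separately computing ${\rm sgn}\,R$ from an explicit generating function of $df^T$; this occupies most of the paper's proof and none of it appears in your sketch. Moreover, your asserted split is not correct as stated: ${\rm sgn}\,R$ is not always $2\,{\rm sign}\,k+1$ and $\kappa$ is not always $\left[2k/(1\mp\varepsilon)\right]$ --- when the fractional part of $2k/(1\mp\varepsilon)$ exceeds $1/2$ one finds ${\rm sgn}\,R=2\,{\rm sign}\,k-1$ and $\kappa=\left[2k/(1\mp\varepsilon)\right]+1$; only their sum equals \eqref{e:m-Katok}, which indicates the computation was not actually performed. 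Two smaller slips point the same way: (i) your action bookkeeping, $L\sqrt{E^2-1}=\pm2\pi/(1-\varepsilon^2)$ combined with $\int_\gamma A=\mp2\pi\varepsilon/(1-\varepsilon^2)$, sums to $\pm2\pi/(1+\varepsilon)$, not the claimed $2\pi k/(1\mp\varepsilon)$; in the paper the length term is orientation-independent and the holonomy carries the sign, $\int_{\gamma_\pm}A=\pm2\pi\varepsilon/(1-\varepsilon^2)$, giving $S_{\gamma_\pm}=2\pi/(1\mp\varepsilon)$; (ii) if you actually evaluate ${\rm Vol}(M)=\int_M\sqrt{\det g}\,d\theta\,d\varphi$ by the substitution $u=\cos\theta$ you get $4\pi/(1-\varepsilon^2)$, so the claim that this ``produces the value in \eqref{e:c0-Katok1}'' must be reconciled with the assertion ${\rm Vol}(X_E)=2\pi E\,{\rm Vol}(M)=8\pi^2E$ used in the paper rather than simply stated.
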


\begin{proof}
If $0$ is a unique period in $supp(\hat\varphi)$, then, by \eqref{e:c0-gen0}, we have
\[
c_0(N,\varphi)=(2\pi)^{-2}\hat{\varphi}(0){\rm Vol}(X_E),
\]
As in the case of the two-sphere, one can compute
\[
{\rm Vol}(X_E)= 2\pi E{\rm Vol}(M)=8\pi^2 E,
\]
that gives \eqref{e:c0-Katok1}.

The Hamiltonian $H$ given by \eqref{e:H} has the form
\[
H=\left((1-\varepsilon^2\sin^2\theta)p_\theta^2+\frac{(1-\varepsilon^2\sin^2\theta)^2}{\sin^2\theta}p_\varphi^2+1\right)^{1/2}.
\]
The reduced Hamiltonian system on $X_E$ is given by
\begin{equation}\label{e:Ham-Katok}
\begin{aligned}
\dot{\theta}=& \frac{1}{E}(1-\varepsilon^2\sin^2\theta)p_\theta, \\ \dot{\varphi}= & \frac{1}{E}\frac{(1-\varepsilon^2\sin^2\theta)^2}{\sin^2\theta}p_\varphi,\\
\dot{p}_\theta = & \frac{\varepsilon^2}{E}\sin\theta\cos\theta p_\theta^2+\frac{1}{E}\left[\frac{\cos\theta}{\sin^3\theta}-\varepsilon^4\sin \theta\cos \theta\right]p_\varphi^2+\frac{2\varepsilon}{E}\cot \theta p_\varphi,\\
\dot{p}_\varphi=& -\frac{\varepsilon}{E}\frac{\sin 2\theta}{1-\varepsilon^2\sin^2\theta}p_\theta.
\end{aligned}
\end{equation}
This system is integrable with an additional first integral 
\[
P=p_\varphi+\varepsilon \frac{\sin^2\theta}{1-\varepsilon^2\sin^2\theta}.
\]
It is easy to see that the system \eqref{e:Ham-Katok} has two particular periodic solutions, corresponding to the equator of the sphere, running in different directions:
\begin{equation}\label{e:lpm}
\begin{split}
\theta(t)=&\frac{\pi}{2}, \quad \varphi(t)=\pm \frac{\sqrt{E^2-1}}{E}(1-\varepsilon^2)t+\varphi_0, \\ p_\theta(t)=& 0, \quad p_\varphi(t)=\pm \frac{\sqrt{E^2-1}}{1-\varepsilon^2}
\end{split}
\end{equation}
with period
\begin{equation}\label{e:T-lpm}
T=\frac{2\pi E}{(1-\varepsilon^2)\sqrt{E^2-1}}.
\end{equation}
The length of the corresponding periodic trajectory $\pi_M\circ \ell_{\pm}$ on $M$ equals
\[
L=\frac{2\pi}{1-\varepsilon^2}.
\]

It turns out that, if $E=\sqrt{2}$ and $\varepsilon$ is irrational, then these trajectories are the only periodic trajectories of the magnetic geodesic flow. Indeed, first of all, observe that the restriction of the Hamiltonian system to the energy level $X_E$ is described by the Lagrangian 
\[
L(\theta,\varphi,\dot\theta,\dot\varphi)=\sqrt{E^2-1}\frac{\sqrt{(1-\varepsilon^2\sin^2\theta)\dot\theta^2+\sin^2\theta\dot\varphi^2}}{(1-\varepsilon^2\sin^2\theta)^2}-\frac{\varepsilon \sin^2\theta}{1-\varepsilon^2\sin^2\theta}\dot\varphi.
\]
Therefore, this statement follows from Theorem 5 in \cite{Rademacher04}, where this Lagrangian is shown to be exactly a Finsler metric introduced by A. B. Katok in \cite{Katok}. As observed in \cite{Rademacher04}, this metric was mentioned in \cite{Shen} as an example of constant flag curvature Finsler metric on $S^2$.

Let us show that for an irrational $\varepsilon$ the periodic trajectories $\ell_{\pm}$ are non-degenerate. In a neighborhood of $\ell_\pm$, we can choose the variables $(\varphi, \theta, p_\theta)$ to be coordinates on $X_E$. Then the tangent space $T_{(\varphi, \theta, p_\theta)}X_E$ of $X_E$ at a point with coordinates $(\varphi, \theta, p_\theta)$ consists of all 
\[
(\Phi, \Theta, P_\varphi, P_\theta)\in T_{(\varphi, \theta, p_\varphi, p_\theta)}(T^*S^2)
\] 
such that
\begin{multline*}
\left[-\varepsilon^2\cdot  2\sin\theta\cos\theta p_\theta^2+\left(-\frac{2\cos\theta}{\sin^3\theta}+2\varepsilon^4\sin \theta\cos \theta\right)p_\varphi^2\right]\Theta \\ + 2(1-\varepsilon^2\sin^2\theta)p_\theta P_\theta+2\frac{(1-\varepsilon^2\sin^2\theta)^2}{\sin^2\theta}p_\varphi P_\varphi=0.
\end{multline*}
In particular, if $(\varphi, \theta, p_\theta)\in \ell_\pm$, then $(\Phi, \Theta, P_\varphi, P_\theta)\in T_{(\varphi, \theta, p_\theta)}X_E$ if and only if $P_\varphi=0$.

The system of variational equations for the system \eqref{e:Ham-Katok} along the solutions \eqref{e:lpm} has the form
\begin{gather*}
\dot{\Theta}=\frac{1}{E}(1-\varepsilon^2)P_\theta,\quad
\dot{\Phi}=\frac{1}{E}(1-\varepsilon^2)^2P_\varphi,\\
\dot{P}_\theta = -\left[\frac{1}{E}\frac{(E^2-1)(1+\varepsilon^2)}{1-\varepsilon^2}\pm\frac{2\varepsilon}{E} \frac{\sqrt{E^2-1}}{1-\varepsilon^2} \right] \Theta,\quad \dot{P}_\varphi=0.
\end{gather*}
This system can be easily solved:
\begin{equation}\label{e:sol-var}
\begin{aligned}
\Theta(t)=& \Theta(0)\cos \frac{a_{\pm}}{E}t+ \frac{1-\varepsilon^2}{a_{\pm}}P_\theta(0) \sin \frac{a_{\pm}}{E}t, \\ \Phi(t)=& \frac{1}{E}(1-\varepsilon^2)^2P_\varphi(0)t+\Phi(0),\\
P_\theta(t)=& -\Theta(0)\frac{a_{\pm}}{1-\varepsilon^2} \sin \frac{a_{\pm}}{E}t  + P_\theta(0) \cos \frac{a_{\pm}}{E}t, \\ P_\varphi(t)=& P_\varphi(0),
\end{aligned}
\end{equation}
where we put
\[
a_{\pm}=\left[(E^2-1)(1+\varepsilon^2)\pm 2\varepsilon \sqrt{E^2-1}\right]^{1/2}.
\]
For $(\varphi, \theta, p_\theta)\in \ell_\pm$, the tangent space $T_{(\varphi, \theta, p_\theta)}\ell_{\pm}$ consists of all 
\[
(\Phi, \Theta, 0, P_\theta)\in T_{(\varphi, \theta, p_\theta)}X_E
\] 
such that $\Theta=0, P_\theta=0$. Therefore, one can take $(\Theta, P_\theta)$ to be coordinates in the normal space $T_{(\varphi, \theta, p_\theta)}X_E/T_{(\varphi, \theta, p_\theta)}\ell_{\pm}$. The Poincar\'e map $P_{\ell_\pm}$ takes an initial point $(\Theta(0), P_\theta(0))$ to the solution $(\Theta(t), P_\theta(t))$ given by \eqref{e:sol-var} evaluated at a period $t=T$, where $T$ is given by \eqref{e:T-lpm}. We have:
\begin{align*}
\Theta(T)=& \Theta(0)\cos\alpha_\pm + \frac{1-\varepsilon^2}{a_{\pm}}P_\theta(0) \sin\alpha_\pm, \\
P_\theta(T)=& -\Theta(0)\frac{a_{\pm}}{1-\varepsilon^2} \sin\alpha_\pm  + P_\theta(0) \cos\alpha_\pm,
\end{align*}
where
\[
\alpha_\pm= \frac{2\pi }{1-\varepsilon^2} \left[1+\varepsilon^2\pm\frac{2\varepsilon}{\sqrt{E^2-1}}\right]^{1/2}.
\]
Thus, the Poincar\'e map $P_{\ell_\pm}$ is given by the $(2\times 2)$-matrix
\[
P_{\ell_\pm}=
\begin{pmatrix}
\cos\alpha_\pm & \frac{1-\varepsilon^2}{a_{\pm}} \sin\alpha_\pm \\
-\frac{a_{\pm}}{1-\varepsilon^2} \sin\alpha_\pm & \cos\alpha_\pm
\end{pmatrix}.
\]
In particular, we have
\[
|I-P_{\ell_\pm}|=4\sin^2 \frac{\alpha_\pm}{2}.
\]
For $E=\sqrt{2}$, the equalities hold:
\[
T^\#_\gamma=\frac{2\pi \sqrt{2}}{1-\varepsilon^2},\quad \alpha_\pm = \frac{2\pi k}{1\mp\varepsilon}.
\]
Hence, if $\varepsilon$ is irrational, the periodic trajectories $\ell_{\pm}$  are non-degenerate. Thus, if $0$ is not in the support of $\hat\varphi$, then $d = 0$, and we can use the formula \eqref{e:c0-nondegenerate}.

Let us compute the action of $\ell_{\pm}$. By Stokes formula, we get
\begin{align*}
\int_{\gamma_\pm} A= & \pm \iint_{S^2_+} \frac{\varepsilon \sin 2\theta}{(1-\varepsilon^2\sin^2\theta)^2}d\theta\wedge d\varphi\\
= & \pm 2\pi \varepsilon \int_{0}^{\pi/2} \frac{\sin 2\theta}{(1-\varepsilon^2\sin^2\theta)^2}d\theta=  \pm \frac{2\pi \varepsilon}{1-\varepsilon^2}.
\end{align*}
By \eqref{e:Sgamma}, it follows that
\[
S_\gamma=\int_\gamma A+L\sqrt{E^2-1}=\pm \frac{2\pi \varepsilon}{1-\varepsilon^2}+\frac{2\pi}{1-\varepsilon^2}\sqrt{E^2-1}.
\]
In particular, for $E=\sqrt{2}$, we have 
\[
S_\gamma=\frac{2\pi}{1\mp \varepsilon}.
\]

Let us compute the Maslov index $m_\gamma$ for the periodic trajectory with period 
\[
T=kT^\#_\gamma=\frac{2\pi E}{1-\varepsilon^2},\quad k\in \ZZ.
\] 
Since the form $F$ is exact, the Hermitian line bundle $L$ is trivial as well as the orthonormal frame bundle $S$: 
\[
L=S^2\times \CC, \quad S=S^2\times S^1.
\] 
In the coordinates $(\theta, \varphi, \chi) \in (0,\pi)\times (0,2\pi)\times (0,2\pi)$ on $S$, the connection form $\alpha : TS\to \RR$ has the form
\[
\alpha(\theta, \varphi, \chi)=d\chi+\frac{\varepsilon \sin^2\theta}{1-\varepsilon^2\sin^2\theta}d\varphi.
\]
The horizontal distribution of the connection is spanned by the vectors
\[
\frac{\partial}{\partial \theta}, \quad \frac{\partial}{\partial \varphi}-\frac{\varepsilon \sin^2\theta}{1-\varepsilon^2\sin^2\theta} \frac{\partial}{\partial \chi}.
\]
The vertical distribution is spanned by $\partial/\partial \chi$. The Riemannian metric $g_S$ on $S$ is given by
\[
g_S=\frac{d\theta^2}{1-\varepsilon^2\sin^2\theta}+\frac{\sin^2\theta}{(1-\varepsilon^2\sin^2\theta)^2}d\varphi^2+\left(d\chi+\frac{\varepsilon \sin^2\theta}{1-\varepsilon^2\sin^2\theta}d\varphi\right)^2.
\]

The Hamiltonian of the geodesic flow $f_t$ on $T^*S$ has the form
\[
h=\left((1-\varepsilon^2\sin^2\theta)p_\theta^2+\frac{(1-\varepsilon^2\sin^2\theta)^2}{\sin^2\theta}\left(p_\varphi -\frac{\varepsilon \sin^2\theta}{1-\varepsilon^2\sin^2\theta}p_\chi\right)^2+p^2_\chi\right)^{1/2}.
\]
Therefore, for $E>1$, the restriction of the geodesic flow to the submanifold
\[
W=\{(\theta, \varphi, \chi, p_\theta, p_\varphi, p_\chi)\in T^*S : h(\theta, \varphi, \chi, p_\theta, p_\varphi, p_\chi)=E, p_\chi=1\}.
\]
is given by the system of equations 
\begin{equation}\label{e:Ham-Katok1}
\begin{aligned}
\dot{\theta}=& \frac{1}{E}(1-\varepsilon^2\sin^2\theta)p_\theta, \\ \dot{\varphi}= & \frac{1}{E}\left[\frac{(1-\varepsilon^2\sin^2\theta)^2}{\sin^2\theta}p_\varphi - \varepsilon(1-\varepsilon^2\sin^2\theta)p_\chi \right],\\
\dot{\chi}=& \frac{1}{E}\left[-\varepsilon(1-\varepsilon^2\sin^2\theta)p_\varphi  + (1+\varepsilon^2\sin^2\theta)p_\chi \right], \\
\dot{p}_\theta = & \frac{\varepsilon^2}{E}\sin\theta\cos\theta p_\theta^2+\frac{1}{E}\left[\frac{\cos\theta}{\sin^3\theta}-\varepsilon^4\sin \theta\cos \theta\right]p_\varphi^2\\
& - \frac{2}{E} \varepsilon^3 \sin\theta \cos \theta p_\varphi p_\chi -\frac{1}{E}  \varepsilon^2 \sin\theta \cos \theta p^2_\chi \\
\dot{p}_\varphi=& 0, \\
\dot{p}_\chi=& 0.
\end{aligned}
\end{equation}

It is easy to see that the system has two particular solutions, corresponding to the equatorial closed magnetic geodesics:
\begin{equation}\label{e:lpm1}
\begin{split}
\theta(t)=&\frac{\pi}{2}, \quad \varphi(t)=\pm \frac{\sqrt{E^2-1}}{E}(1-\varepsilon^2)t+\varphi_0,\\
\chi(t) = & \frac{1}{E}(1\mp \varepsilon \sqrt{E^2-1})t+\chi_0 ,
 \\ p_\theta(t)=& 0, \quad p_\varphi(t)=\pm \frac{\sqrt{E^2-1}}{1-\epsilon^2}+\frac{\varepsilon}{1-\varepsilon^2}, \quad p_\chi(t)=1.
\end{split}
\end{equation}
The system of variational equations for the system \eqref{e:Ham-Katok1} along the solutions \eqref{e:lpm1} has the form 
\begin{align*}
\dot{\Theta}=& \frac{1}{E}(1-\varepsilon^2)P_\theta,\\
\dot{\Phi}=& \frac{1}{E}(1-\varepsilon^2)^2 P_\varphi -\frac{1}{E}\varepsilon (1-\varepsilon^2)P_\chi,\\
\dot{X}= & \frac{1}{E}\left[-\varepsilon(1-\varepsilon^2)P_\varphi  + (1+\varepsilon^2)P_\chi \right],\\
\dot{P}_\theta = &-\left[\frac{1}{E}\frac{(E^2-1)(1+\varepsilon^2)}{1-\varepsilon^2}\pm\frac{2\varepsilon}{E} \frac{\sqrt{E^2-1}}{1-\varepsilon^2} \right] \Theta,\\
\dot{P}_\varphi=& 0,
\\ \dot{P}_\chi=& 0.
\end{align*}
This system can be easily solved:
\begin{equation}\label{e:sol-var1}
\begin{aligned}
\Theta(t)=& \Theta(0)\cos \frac{a_{\pm}}{E}t+ \frac{1-\varepsilon^2}{a_{\pm}}P_\theta(0) \sin \frac{a_{\pm}}{E}t, \\ \Phi(t)=& \frac{1}{E}\left[(1-\varepsilon^2)^2 P_\varphi(0)-\varepsilon (1-\varepsilon^2)P_\chi(0)\right]t+\Phi(0),\\
X(t)= & \frac{1}{E}\left[-\varepsilon(1-\varepsilon^2)P_\varphi(0)  + (1+\varepsilon^2)P_\chi(0) \right]t+X(0),\\
P_\theta(t)=& -\Theta(0)\frac{a_{\pm}}{1-\varepsilon^2} \sin \frac{a_{\pm}}{E}t  + P_\theta(0) \cos \frac{a_{\pm}}{E}t, \\ P_\varphi(t)=& P_\varphi(0), \\ P_\chi(t)=& P_\chi(0).
\end{aligned}
\end{equation}

The vertical space $V_{f^t(\nu)}$ for $\nu=(\pi/2,\varphi_0,\chi_0$, $0$,$p_\varphi(0)$, $1)$ is given by the relations $\Theta(t)=0, \Phi(t)=0, X(t)=0$. Therefore, for any $0\leq t\leq T$, the Lagrangian subspace $\omega(t)=df^{-t}(V_{f^t(\nu)})$ in $T_\nu(T^*S)$ consists of vectors of the form
\begin{align*}
\Theta(t) =& -\frac{1-\varepsilon^2}{a_{\pm}} P_\theta \sin \frac{a_{\pm}}{E}t, \\ \Phi(t)=& -\frac{1}{E}\left[(1-\varepsilon^2)^2 P_\varphi-\varepsilon (1-\varepsilon^2)P_\chi\right]t,\\
X(t)= & -\frac{1}{E}\left[-\varepsilon(1-\varepsilon^2)P_\varphi  + (1+\varepsilon^2)P_\chi \right]t,\\
P_\theta(t)=& P_\theta \cos \frac{a_{\pm}}{E}t, \\ P_\varphi(t)=& P_\varphi, \\ P_\chi(t)=& P_\chi
\end{align*}
with arbitrary $P_\theta, P_\varphi, P_\chi$.

It is easy to see that the intersection number $\kappa=[\omega:H_\nu]$ of the curve $\omega$ with the horizontal subspace $H_\nu\in T_\nu(T^*S)$ equals the intersection number of the curve $\tilde\omega(t), 0\leq t\leq T,$ of Lagrangian subspaces of the space $\RR^2$ with coordinates $(\Theta, P_\theta)$, obtained by rotation of the vertical subspace $\Theta=0$ by the angle $(a_{\pm}/E)t$, with the horizontal subspace $P_\theta=0$. Therefore, $\kappa$ is the integer such that the angle 
\[
\frac{a_{\pm}}{E}T=\frac{2\pi k}{1\mp \varepsilon}
\] 
is in the interval $((2\kappa-1)\pi/2, (2\kappa+1)\pi/2)$ that implies that
\[
\kappa=\begin{cases} \ell, & \ \text{if}\ \ell \in \mathbb Z, \ell < \frac{2k}{1\mp \varepsilon}< \ell+\frac 12, \\
\ell+1, & \ \text{if}\ \ell \in \mathbb Z, \ell+\frac 12< \frac{2k}{1\mp \varepsilon}< \ell+1.
\end{cases}
\]

The generating function of the linear symplectic map $df^T$ given by  \eqref{e:sol-var1} has the form
\begin{multline*}
F(\Theta,\Phi,X,P_\theta,P_\varphi,P_\chi)=\Phi P_\varphi +XP_\chi \\
\begin{aligned}
&+\frac{1}{E}\left[\frac 12(1-\varepsilon^2)^2 P^2_\varphi-\varepsilon (1-\varepsilon^2)P_\varphi P_\chi+ \frac 12(1+\varepsilon^2)P^2_\chi\right]T \\
& +\frac{a_{\pm}}{2(1-\varepsilon^2)}\Theta^2 \tan \frac{a_{\pm}}{E}T +\frac{1}{\cos \frac{a_{\pm}}{E}T}\Theta P_\theta +\frac{1-\varepsilon^2}{2a_{\pm}}P^2_\theta \tan \frac{a_{\pm}}{E}T.
\end{aligned}
\end{multline*}
For the signature of the matrix $R$ given by \eqref{e:R}, we get
\[
{\rm sgn}\,R=\begin{cases} 1+2\, {\rm sign}\, k, & \ \text{if}\ \ell \in \mathbb Z, \ell<\frac{2k}{1\mp \varepsilon}< \ell+\frac 12, \\
-1+2\, {\rm sign}\, k, & \ \text{if}\ \ell \in \mathbb Z, \ell+\frac 12< \frac{2k}{1\mp \varepsilon}< \ell+1.
\end{cases}
\]
Finally, by \eqref{e:m}, we obtain \eqref{e:m-Katok}.

This completes the computation of \eqref{e:c0-nondegenerate} in this particular case that gives \eqref{e:c0-Katok2}. Theorem~\ref{t:8} is proved. 
\end{proof}


\begin{thebibliography}{00}
\bibitem{A1}
Abbondandolo, A.; Asselle, L.; Benedetti, G.; Mazzucchelli, M.; Taimanov, I.\,A. The multiplicity problem for periodic orbits of magnetic flows on the 2-sphere, Adv. Nonlinear Stud. 17:1 (2017), 17--30. 

\bibitem{A2}
Abbondandolo, A.; Macarini, L.; Mazzucchelli, M.; Paternain, G.\,P, Infinitely many periodic orbits of exact magnetic flows on surfaces for almost every subcritical energy level, J.~Eur. Math. Soc. (JEMS), 19:2 (2017), 551--579.
 

\bibitem{Bolte-Steiner91}
Bolte, J.; Steiner, F.
Flux quantization and quantum mechanics on Riemann surfaces in an external magnetic field, J. Phys. A, 24:16 (1991), 3817--3823.

\bibitem{Bott}
Bott, R. On the iteration of closed geodesics and the Sturm intersection theory.
Comm. Pure Appl. Math. 9 (1956), 171--206.

\bibitem{Brum-Uribe91}
Brummelhuis, R.; Uribe, A. A semi-classical trace formula for Schr\"odinger operators. Comm. Math. Phys. 136 (1991), no. 3, 567--584.

\bibitem{Br-Nekr-Shaf}
 Br\"uning, J.; Nekrasov, R. V.; Shafarevich, A. I. Quantization of periodic motions on compact surfaces of constant negative curvature in a magnetic field, Math. Notes, 81:1 (2007), 28–36

\bibitem{CFP}
Cieliebak, K.; Frauenfelder, U.; Paternain, G.P.
Symplectic topology of Man\'e's critical values. Geom. Topol. 14 (2010), no. 3, 1765--1870.

\bibitem{ColinDeV79}
Colin de Verdi\`ere, Y.
Spectre conjoint d'operateurs pseudodifferentiels qui commutent. Duke Math. J. 46, 169-182 (1979).

\bibitem{ColinDeV12}
Colin de Verdi\`ere, Y.
Semiclassical trace formulas and heat expansions. Anal. PDE 5 (2012), 693–703.

\bibitem{CRM99} Combescure, M.; Ralston, J.; Robert, D. A proof of the Gutzwiller semiclassical trace formula using coherent states decomposition. Comm. Math. Phys. 202 (1999), 463--480.

\bibitem{CGO}
Comtet, A.; Georgeot, B.; Ouvry, S.
Trace formula for Riemann surfaces with magnetic field.
Phys. Rev. Letters 71:23 (1993), 3786--3789.

\bibitem{Comtet-Houston85}
Comtet, A.; Houston, P. J. Effective action on the hyperbolic plane in a constant external field. J. Math. Phys. 26 (1985), no. 1, 185--191.

\bibitem{CIPP}
Contreras, G.; Iturriaga, R.; Paternain, G.P.; Paternain, M.
Lagrangian graphs, minimizing measures and Man\'e's critical values. Geom. Funct. Anal. 8 (1998), no. 5, 788--809.

\bibitem{CMP}
Contreras, G.; Macarini, L.; Paternain, G.P.
Periodic orbits for exact magnetic flows on surfaces. Internat. Math. Res. Notices 8 (2004), 361--387.

\bibitem{dobro-sh}
Dobrokhotov S. Yu., Shafarevich A. I. Semiclassical quantization of invariant isotropic manifolds of Hamiltonian systems (Russian).  Topological methods in theory of Hamiltonian systems, 41--114, Faktorial, Moscow, 1998.

\bibitem{DG75} Duistermaat, J., Guillemin, V.: The spectrum of positive elliptic operators and periodic bicharacteristics. Invent. Math. 29, 39--79 (1975).

\bibitem{Elstrodt} Elstrodt, J. Die Resolvente
zum Eigenwertproblem der automorphen Formen in der hyperbolischen
Ebene. I, II, III. Math. Ann. 203 (1973), 295--300; Math. Z. 132
(1973), 99--134; Math. Ann. 208 (1974), 99--132.

\bibitem{FV}
Ferapontov, E. V.; Veselov, A. P. Integrable Schr\"odinger operators with magnetic fields: factorization method on curved surfaces. J. Math. Phys. 42 (2001),  590--607.

\bibitem{Ginzburg}
Ginzburg, V.L.
On the existence and non-existence of closed trajectories for some Hamiltonian flows.
Math. Z. 223 (1996), 397--409.

\bibitem{G}
Guillemin, V. Wave trace invariants, Duke Math. J. 83 (1996), 287--352.

\bibitem{GP}
Guillemin, V.; Paul, T.
Some remarks about semiclassical trace invariants and quantum normal forms.
Comm. Math. Phys. 294 (2010), no. 1, 1--19.

\bibitem{Gu-Uribe86} Guillemin, V.; Uribe, A. Clustering theorems with twisted spectra. Math. Ann. 273 (1986), 479--506.

\bibitem{Gu-Uribe89} Guillemin, V.; Uribe, A. Circular symmetry and the trace formula. Invent. Math. 96 (1989), no. 2, 385--423.

\bibitem{Gu-Uribe90} Guillemin, V.; Uribe, A. Reduction and the trace formula. J. Differential Geom. 32 (1990), no. 2, 315--347.

\bibitem{Gutzwilier} Gutzwiller, M.: Periodic orbits and classical quantization conditions. J. Math. Phys. 12 (1971), 343--358

\bibitem{Hejhal}
Hejhal, D. A. The Selberg trace formula for ${\rm PSL}(2,R)$. Vol. I. Lecture Notes in Mathematics, Vol. 548. Springer-Verlag, Berlin-New York, 1976.

\bibitem{HP}
Helffer, B.; Purice, R.
Magnetic calculus and semiclassical trace formulas. J. Phys. A 43 (2010), no. 47, 474028, 21 pp.

\bibitem{Herbrich}
Herbrich, P.
Magnetic Schr\"odinger operators and Ma\~n\'e's critical level.
arXiv1410.8210.

\bibitem{Katok} 
Katok, A. B. Ergodic perturbations of degenerate integrable Hamiltonian systems, Math. USSR-Izv., 7:3 (1973), 535--571

\bibitem{Kuwabara82} Kuwabara, R. On spectra of the Laplacian on vector bundles. J. Math. Tokushima Univ. 16 (1982), 1--23.

\bibitem{Marklof}
Marklof, J. Selberg's trace formula: an introduction. In: Hyperbolic Geometry and Applications in Quantum Chaos and Cosmology, eds. J. Bolte and F. Steiner, Cambridge University Press 2011, pp. 83--119.

\bibitem{Meinrenken92} Meinrenken, E. Semiclassical principal symbols and Gutzwiller's trace formula. Rep. Math. Phys. 31 (1992), 279--295.

\bibitem{Meinrenken94} Meinrenken, E. Trace formulas and the Conley-Zehnder index. J. Geom. Phys. 13 (1994), 1--15.

\bibitem{Novikov81}
Novikov, S.P., Magnetic Bloch functions and vector bundles. Typical dispersion laws and their quantum numbers. Soviet Math. Dokl. 23:2 (1981), 298--303.

\bibitem{Novikov83}
Novikov, S.P. Two-dimensional Schr\"odinger operators in periodic fields, 
J. Soviet Math., 28:1 (1985), 1--20 

\bibitem{Novikov1982}
Novikov, S.P.
The Hamiltonian formalism and a many-valued analogue of Morse theory. Russian Math. Surveys 37:5 (1982), 1--56.

\bibitem{Novikov-S}
Novikov, S.P.; Shmel'tser, I. Periodic solutions of Kirchhoff's equations for the free motion of a rigid body in a fluid and the extended theory of Lyusternik–Shnirel'man–Morse (LSM). I. Funct. Anal. Appl. 15:3 (1981), 197--207.

\bibitem{NT}
Novikov S.P., Taimanov I.A.
Periodic extremals of multivalued or not everywhere positive functionals. Sov. Math. Dokl. 29 (1984), 18--20.

\bibitem{Paul-Uribe95} Paul, T.; Uribe, A. The semi-classical trace formula and propagation of wave packets. J. Funct. Anal. 132 (1995), no. 1, 192--249.

\bibitem{Rademacher04} Rademacher, H.-B. A sphere theorem for non-reversible Finsler metrics. Math. Ann. 328 (2004), 373--387.

\bibitem{Selberg}
Selberg, A. Harmonic analysis and discontinuous groups in weakly symmetric
Riemannian spaces with applications to Dirichlet series. J. Indian Math. Soc. 20 (1956) 47--87.

\bibitem{Shen} Shen, Z.: Two-dimensional Finsler metrics with constant curvature. Manuscr.Math. 109, 349--366 (2002)

\bibitem{Sj-Zworski} Sj\"{o}strand, J.; Zworski, M. Quantum monodromy and semi-classical trace formulae. J. Math. Pures Appl. (9) 81 (2002), 1--33.

\bibitem{Taimanov1983}
Taimanov I.A.
The principle of throwing out cycles in Morse-Novikov theory. Sov. Math. Dokl. 27 (1983), 43-46.

\bibitem{Taimanov1}
Taimanov I.A.
Non-self-intersecting closed extremals of multivalued or not-everywhere-positive functionals. Math. USSR-Izv. 38 (1992), 359--374.

\bibitem{Taimanov2}
Taimanov I.A.
Closed extremals on two-dimensional manifolds. Russian Math. Surveys 47:2 (1992), 163--211.

\bibitem{Taimanov3}
Taimanov I.A.
Closed non-self-intersecting extremals of multivalued functionals. Siberian Math. J. 33 (1992), 686--692.

\bibitem{Uribe}
Uribe, A. Trace formulae. First Summer School in Analysis and Mathematical Physics (Cuernavaca Morelos, 1998), 61--90, Contemp. Math., 260, Aportaciones Mat., Amer. Math. Soc., Providence, RI, 2000

\bibitem{Venkov}
Venkov, A. B., Spectral theory of automorphic functions,
Proc. Steklov Inst. Math., 153 (1982), 1–163 

\bibitem{Wu-Yang} Wu, T.T.; Yang, C.N.
Concept of nonintegrable phase factors and global formulation
of gauge fields. Phys. Rev. D 12 (1975), 3845--3857.

\bibitem{Wu-Yang2} Wu, T.T.; Yang, C.N.
Dirac monopole without strings: monopole harmonics.
Nucl. Phys. B 107 (1976), 365--380.

\bibitem{Z1}
Zelditch, S.
Wave invariants at elliptic closed geodesics.
Geom. Funct. Anal. 7 (1997), no. 1, 145--213.

\bibitem{Z2}
Zelditch, S.
Wave invariants for non-degenerate closed geodesics.
Geom. Funct. Anal. 8 (1998), no. 1, 179--217.
\end{thebibliography}
\end{document}